\newtheorem{thm}{Theorem}[section]
\newtheorem{lem}[thm]{Lemma}
\theoremstyle{definition}
\numberwithin{equation}{section}
\newcommand{\N}{\mathbb{N}}
\newcommand{\Z}{\mathbb{Z}}
\newcommand{\ep}{\epsilon}
\newcommand{\ra}{\rightarrow}
\numberwithin{equation}{section}
\begin{document}

\title{discrete spectrum for amenable group actions}
\author{Tao Yu, Guohua Zhang and Ruifeng Zhang}

\address[T. Yu]{Department of Mathematics,
Shantou University, Shantou 515063, Guangdong, China -- and -- Shanghai Center for Mathematical Sciences, Fudan University, Shanghai 200433, China}
\email{ytnuo@mail.ustc.edu.cn}

\address[G. H. Zhang]{School of Mathematical Sciences and Shanghai Center for Mathematical Sciences, Fudan University, Shanghai 200433, China}
\email{chiaths.zhang@gmail.com}

\address[R. F. Zhang]{School of Mathematics, Hefei University of Technology, Hefei 230009, Anhui, China}
\email{rfzhang@hfut.edu.cn}

\begin{abstract}
 In this paper, we study
 discrete spectrum of invariant measures
for countable discrete amenable group actions.

We show that an invariant measure has discrete spectrum if and only if it has bounded measure complexity. We also prove that, discrete spectrum can be characterized via measure-theoretic complexity using names of a partition and the
Hamming distance, and it turns out to be equivalent to both mean equicontinuity
and equicontinuity in the mean.
\end{abstract}

\subjclass[2010]{Primary  37A35; Secondary 37A15}
\keywords{discrete spectrum, amenable group actions, almost periodic functions, bounded complexity, mean equicontinuous, equicontinuous in the mean}
\maketitle

\section{Introduction}

There is a long history for the study of discrete spectrum, which has attracted substantial interest in the literature since its introduction in dynamical systems.

Recently, discrete spectrum was studied in different settings by many mathematicians, for example,
in a wider context of the dynamics of metrics in measure spaces
by Vershik in a series of papers \cite{V10} and references therein, from the viewpoint of measure complexity by
Huang, Wang and Ye in \cite{HWY}
and Huang, Li, Thouvenot, Xu and Ye in \cite{HLTXY} and Vershik, Zatitskiy and Petrov in \cite{Vershik},
 flows by Scarpellini in \cite{S82}, measure dynamical systems
on locally compact $\sigma$-compact abelian groups and its equivalence to pure point diffraction by Lenz and Strungaru in \cite{LS09}, actions of locally compact $\sigma$-compact abelian groups on compact metric spaces by Lenz in \cite{Lenz}, actions of locally compact $\sigma$-compact amenable groups on compact metric spaces by Fuhrmann, Gr\"{o}ger and Lenz in \cite{FGL}.

\medskip

For a single measure-preserving transformation, an ergodic system with discrete spectrum may be the simplest ergodic system with zero measure-theoretic entropy.
 By the well-known Halmos-von Neumann Representation Theorem (see for example \cite{HVN} or
\cite[Theorem 3.6]{Walters}), the ``standard model" of an ergodic system (for a single transformation) with discrete spectrum is the minimal rotation over a compact abelian metric group.

Usually people describe the complexity of a dynamical system by
measuring the rate of the growth of the orbits in the system.
In \cite{Katok} Katok introduced the
complexity function via the modified notion of spanning sets
with respect to (w.r.t. for short) an invariant measure and an error parameter.
Recently, during their investigation of the Sarnak's conjecture, Huang, Wang and Ye \cite{HWY}
introduced the measure complexity of an invariant measure similar to the one introduced by Katok, by using the mean metric instead of the Bowen metric. They showed that if an
invariant measure has discrete spectrum, then the measure complexity w.r.t.
this invariant measure is bounded. Later in \cite{HLTXY} Huang, Li, Thouvenot, Xu and Ye proved that the converse statement remains true, and so
an
invariant measure has discrete spectrum if and only if it has bounded measure complexity.
Moreover, in \cite{HLTXY} the property of discrete spectrum was further connected to the so-called mean equicontinuity and equicontinuity in the mean.
Remark that the characterization of discrete spectrum using measure complexity was also proved by Vershik, Zatitskiy and Petrov in \cite{Vershik} via studying the class of so-called admissible metrics
in a Lebesgue space, under a mild condition that the standard Lebesgue space is non-atomic. In fact, this may go back to \cite{V11} via scaled entropy and even \cite{V10} by Vershik.
Different from the one by Katok, in \cite{Fer} Ferenczi characterized an
ergodic invariant measure with discrete spectrum
 via measure-theoretic
complexity using names of a partition and the Hamming distance, which was extended recently to non-ergodic case by Yu in \cite{Yu2018}.

\medskip

Note that for a topological dynamical system $(X, T)$, where $X$ is a compact metric space and $T: X\rightarrow X$ is a self-homeomorphism, there always exist
invariant Borel probability measures on $X$ so that the classic ergodic theory involves the study
of $(X, T)$.
Whereas, there are some groups $G$ such that there exists no any
invariant Borel probability measures on some compact metric space, where $G$ acts over it compatibly, for example
the rank two free group. It is well known that, for a group action with a countable discrete group, the
amenability of the group ensures the existence of invariant Borel probability measures, which
includes all finite groups, solvable groups and compact groups.

Thus the following question rises naturally. When considering group actions with a countable infinite discrete amenable group over compact metric spaces, may we still characterize discrete spectrum using measure complexity along the line of \cite{HLTXY, HWY, Vershik}, and is there still any relationship with the so-called mean equicontinuity, equicontinuity in the mean and measure-theoretic complexity as in \cite{Fer, HLTXY, Yu2018}?

\medskip

In this paper we will answer the above questions affirmatively.

\medskip

Recall that a countable infinite discrete group $G$ is {\it amenable} (see \cite{EF}) if there exists a
sequence of finite nonempty subsets $F_n\subset G$, which is called a {\it
F{\o}lner sequence}, such that $\lim_{n\rightarrow \infty} \frac{|g F_n\Delta F_n|}{|F_n|}=0$ for every
$g \in G$, where $|\bullet|$ denotes the cardinality of a subset $\bullet$.

By a \emph{topological dynamical system} (\emph{t.d.s.}\ for short)
we mean a pair $(X,G)$, where $X$ is a compact metric space with a metric $d$ and
$G$ is a countable infinite discrete amenable group acting as a group of homeomorphisms over $X$.
Denote by $\mathcal{B}_X$ the Borel $\sigma$-algebra of $X$, and $\mathcal{M} (X, G)$ the set of all invariant Borel probability measures over $X$, respectively. It is a standard fact that $\mathcal{M} (X, G)$ is a nonempty compact metric convex space, whose extreme points consist of exactly its ergodic elements. For more story of actions of amenable groups cf. \cite{MoulinOllagnier85book, OW, WZ}.
\emph{From now on, we will fix $(X, G)$ to be a t.d.s. with the metric $d$ and $\mu\in \mathcal{M} (X, G)$, and fix $\{F_n: n\in \mathbb{N}\}$ to be a F${\o}$lner sequence of $G$.}

Following \cite{Zimmer}, let $H_c\subset L^2(X,\mu)$ be the closed subspace generated by all finite dimensional
$G$-invariant subspaces of $L^2(X,\mu)$, and we say that $\mu$ has {\it discrete spectrum} if $H_c = L^2(X, \mu)$\footnote{Observe that, even if $\mu\in \mathcal{M} (X, G)$ has discrete spectrum, in general $L^2(X, \mu)$ is not necessarily spanned by
the eigenfunctions. Similar to \cite[Lemma 2.6]{Sakai}, if $L^2(X, \mu)$ is spanned by
the eigenfunctions then the group $G$ acts essentially as an abelian group over $(X, \mathcal{B}_X, \mu)$.}. According to \cite[Theorem 7.1]{Zimmer}, $H_c$ consists of all
\emph{almost periodic} functions $f\in L^2(X, \mu)$, that is, $\{f\circ g: g\in G\}$ is precompact in $L^2(X, \mu)$, where $f\circ g (x)= f (g x)$ for any $g\in G$ and all $x\in X$.
 In particular, $\mu$ has discrete spectrum
 if and only if any $f\in L^2(X,\mu)$ is almost periodic.

We show that $\mu$ has discrete spectrum if and only if it has bounded measure complexity, by characterizing almost periodic functions via the complexity function. This extends the corresponding results from an abelian group \cite{Vershik} (see also \cite{HLTXY, HWY, Yu2018} for the integer group $\mathbb{Z}$) to the case of a countable infinite discrete amenable group. We also prove that, discrete spectrum can be characterized via measure-theoretic complexity using names of a partition and the
Hamming distance, and it turns out to be equivalent to both mean equicontinuity
and equicontinuity in the mean. This generalizes the corresponding results \cite{Fer, HLTXY, Yu2018} from $\mathbb{Z}$-actions to our setting.

 \medskip

 In order to state precisely our main result, we need to make some preparations.

Let $w$ be a semimetric of $X$. Then the F${\o}$lner sequence $\{F_n: n\in \mathbb{N}\}$ generates naturally the sequence $\{\overline{w}_{F_n}: n\in \N\}$, where, for any $n\in \N$ and all $x, y\in X$,
$$\overline{w}_{F_n}(x,y)=\frac{1}{|F_n|}\underset{g\in F_n}{\sum}w(gx,gy).$$
For any $\ep>0$, we define {\it the complexity function of $\mu$ w.r.t. $\{\overline{w}_{F_n}: n\in \mathbb{N}\}$} as
$$C(\overline{w}_{F_n},\ep)=\min\left\{m\in\N: \exists x_1, \cdots, x_m\in X\ \text{s.t.}\ \mu\left(\bigcup_{i=1}^m B_{\overline{w}_{F_n}}(x_i,\ep/2)\right)>1-\ep\right\},$$
where $B_{\overline{w}_{F_n}}(x_i,\ep/2)= \{y\in X: \overline{w}_{F_n}(x_i,y)< \ep/2\}$.
We say that $\mu$ has \emph{bounded complexity w.r.t. $\{\overline{w}_{F_n}: n\in \mathbb{N}\}$} if for any $\ep>0$ there exists $M= M(\ep)\in \N$
such that $C(\overline{w}_{F_n},\ep)\le M$ for all $n\in \N$.
As we shall see, we are mainly interested in continuous semimetrics over $X$, we are also interested in the Hamming distance of names of a partition $\alpha$ (by a \emph{partition} $\alpha\subset \mathcal{B}_X$ we mean a finite family of pairwise disjoint subsets with $X$ as its union).

Denote by $\mathcal{P}_X$ the set of all partitions of $X$. Let $\alpha\in \mathcal{P}_X$ with $\alpha=\{A_1, \cdots, A_l\}, l\in \mathbb{N}$. For any point $x\in X$, the \emph{$\alpha$-name of $x$} is
$\{\alpha_g(x): g\in G\}$, where $\alpha_g(x)=k$ whenever $gx \in A_k$ for any $g\in G$.
It is trivial the following fact concerning the Hamming distance via $\alpha$-names that, for all $x,y\in X$ and each $n\in \mathbb{N}$,
 \[\frac{1}{|F_n|}|\{g\in F_n\colon \alpha_g(x)\neq \alpha_g(y)\}|= \overline{H^\alpha}_{F_n}(x,y)\]
where $H^\alpha(x,y)=\frac{1}{2}\sum_{i=1}^l |1_{A_i}(x)-1_{A_i}(y)|$.
It is also worth noticing that for $\alpha=\{A, X\setminus A\}$ one has $H^\alpha(x,y)=|1_{A}(x)-1_{A}(y)|$
for all $x,y\in X$.

We say that a compact set $K\subset X$ is \emph{mean equicontinuous} (\emph{equicontinuous in the mean}, respectively) w.r.t. $\{F_n: n\in \mathbb{N}\}$, if for every $\ep>0$,
there exists a $\delta>0$ such that $\limsup_{n\to\infty} \overline{d}_{F_n}(x,y)<\ep$ ($\overline{d}_{F_n}(x,y)<\ep$ for any $n\in \N$, respectively)
whenever $x,y\in K$ satisfy $d(x,y)<\delta$.
We say that $\mu\in \mathcal{M} (X,G)$ is \emph{mean equicontinuous} (\emph{equicontinuous in the mean}, respectively) w.r.t. $\{F_n: n\in \mathbb{N}\}$, if for every $\tau>0$
there exists a mean equicontinuous set (an equicontinuous in the mean set, respectively) $K\subset X$ w.r.t. $\{F_n: n\in \mathbb{N}\}$ such that $\mu(K)>1-\tau$. See \cite{Felipe, HLTXY} (and references therein) for history and recent progress about mean equicontinuity and equicontinuity in the mean.

\medskip

Recall that a sequence $\{E_n: n\in \mathbb{N}\}$ of finite subsets of $G$ is {\it tempered} if there exists
finite $C > 0$ such that  $|\underset{k<n}{\bigcup}E_k^{-1}E_n|\le C|E_n|$ for any $n\in \N$.
Every F\o lner sequence admits a
tempered subsequence \cite[Proposition 1.4]{Lindenstrauss}.

\medskip

The main result of this paper is stated as follows.

\begin{thm} \label{main}
 Let $(X,G)$ be a t.d.s. with metric $d$ and
$\mu \in M(X,G)$. Let $\{F_n: n\in \mathbb{N}\}$ be a F${\o}$lner sequence of $G$. Then the following statements are equivalent:
\begin{enumerate}
\item $\mu$ has discrete spectrum. \label{mfirst}

\item $\mu$ has bounded complexity w.r.t. $\{\overline{d}_{F_n}: n\in \mathbb{N}\}$. \label{msecond}

\item $\mu$ has bounded complexity w.r.t. $\{\overline{H^\alpha}_{F_n}: n\in \mathbb{N}\}$ for any $\alpha\in \mathcal{P}_X$. \label{mthird}

\item $\mu$ has bounded complexity w.r.t. $\{\overline{H^\alpha}_{F_n}: n\in \mathbb{N}\}$ for any $\alpha= \{A, X\setminus A\}\in \mathcal{P}_X$. \label{mfourth}
\end{enumerate}
Furthermore, if the F${\o}$lner sequence $\{F_n: n\in \mathbb{N}\}$ is tempered, then the above statements are equivalent to any of the following two statements:
\begin{enumerate}

\item[(5)] $\mu$ is mean equicontinuous w.r.t. $\{F_n: n\in \mathbb{N}\}$. \label{mfive}

\item[(6)] $\mu$ is equicontinuous in the mean w.r.t. $\{F_n: n\in \mathbb{N}\}$. \label{msix}
\end{enumerate}
\end{thm}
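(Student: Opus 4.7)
The plan is to establish the cycle $(1)\Rightarrow(2)\Rightarrow(3)\Rightarrow(4)\Rightarrow(1)$, together with the additional loop $(2)\Rightarrow(6)\Rightarrow(5)\Rightarrow(2)$ under the tempered hypothesis on $\{F_n\}$. The heart of the whole theorem is the equivalence $(1)\Leftrightarrow(2)$, and the first step I would isolate is the following key lemma: for any $f\in L^2(X,\mu)$, the function $f$ is almost periodic if and only if $\mu$ has bounded complexity with respect to $\{\overline{\rho_f}_{F_n}\}$, where $\rho_f(x,y)=|f(x)-f(y)|$. Its proof should follow the scheme of \cite{HLTXY,HWY} adapted to amenable groups via $G$-invariance of $\mu$ and the Koopman representation. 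With this lemma in hand, $(1)\Rightarrow(2)$ follows by choosing $\phi_k(x)=d(x,y_k)$ for a countable dense $\{y_k\}\subset X$, applying the lemma to each $\phi_k$, and noting that for any $\varepsilon>0$ one has $\overline{d}_{F_n}(x,y)\le\sum_{k\le K}\overline{\rho_{\phi_k}}_{F_n}(x,y)+2\varepsilon$ for $K=K(\varepsilon)$ large enough, so a product of the $m_k$ covers for $\phi_k$ gives a uniform $\overline{d}_{F_n}$-cover. Conversely, $(2)\Rightarrow(1)$ uses that every $\phi\in C(X)$ has a concave modulus of continuity $\omega_\phi$, whence $\overline{\rho_\phi}_{F_n}\le\omega_\phi(\overline{d}_{F_n})$ by Jensen; bounded $\overline{d}_{F_n}$-complexity then implies bounded $\overline{\rho_\phi}_{F_n}$-complexity, the lemma yields $C(X)\subset H_c$, and density of $C(X)$ in $L^2(X,\mu)$ with closedness of $H_c$ forces $H_c=L^2(X,\mu)$.

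For the Hamming block, $(2)\Rightarrow(3)$ proceeds by approximation: given $\alpha=\{A_1,\ldots,A_l\}$ and $\varepsilon>0$, pick Lipschitz $\psi_i$ with $\|1_{A_i}-\psi_i\|_{L^1(\mu)}$ small, set $\widetilde{H}(x,y)=\tfrac{1}{2}\sum_i|\psi_i(x)-\psi_i(y)|\le L\,d(x,y)$, and observe via $G$-invariance of $\mu$ that $\|\overline{H^\alpha}_{F_n}-\overline{\widetilde{H}}_{F_n}\|_{L^1(\mu\times\mu)}$ is uniformly small in $n$. A Markov step then transfers any bounded $\overline{d}_{F_n}$-cover of $X$ to a bounded $\overline{H^\alpha}_{F_n}$-cover on a set of $\mu$-measure $>1-\varepsilon$. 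The implication $(3)\Rightarrow(4)$ is immediate. For $(4)\Rightarrow(1)$, note that for $\alpha=\{A,X\setminus A\}$ one has $\overline{H^\alpha}_{F_n}=\overline{\rho_{1_A}}_{F_n}$; invoking the key lemma a second time gives almost periodicity of $1_A$ for every $A\in\mathcal{B}_X$, and density of simple functions in $L^2(X,\mu)$ with closedness of $H_c$ yields $H_c=L^2(X,\mu)$.

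For the mean equicontinuity loop, the tempered hypothesis enters through Lindenstrauss's pointwise ergodic theorem. The step $(6)\Rightarrow(5)$ is immediate from the definitions. For $(5)\Rightarrow(2)$, a mean equicontinuous compact $K\subset X$ with $\mu(K)>1-\varepsilon$ can be covered by finitely many $d$-balls of radius $\delta$ small enough that $\limsup_n\overline{d}_{F_n}<\varepsilon/2$ on each, after which absorbing the initial segment of $n$ delivers $(2)$. The main technical hurdle is $(2)\Rightarrow(6)$, since the centers of the $\overline{d}_{F_n}$-covers produced by $(2)$ may depend on $n$. I would apply Lindenstrauss's theorem on $(X\times X,\mu\times\mu)$ under the diagonal $G$-action with integrand $d$ to obtain a limit $d^\ast$ with $\overline{d}_{F_n}\to d^\ast$ almost everywhere, use Egorov to pass to a compact set $E\subset X\times X$ of large measure on which this convergence is uniform, and combine $E$ with the covers from $(2)$ to extract an $n$-independent $\delta$-$\varepsilon$ modulus for equicontinuity in the mean. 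The main obstacle I anticipate lies in this last uniformization: matching the Egorov set $E\subset X\times X$ with the single-variable modulus demanded by the definition of equicontinuity in the mean requires a careful Fubini-plus-diagonalization argument.
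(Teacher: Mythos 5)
Your overall architecture matches the paper's: isolate the single-function statement ($f$ almost periodic iff $\mu$ has bounded complexity w.r.t.\ $\{\overline{\rho_f}_{F_n}\}$) as the key lemma, derive $(1)\Leftrightarrow(2)$ and the Hamming equivalences from it, and use Lindenstrauss plus Egorov for the tempered block. Two of your steps are genuine (and pleasant) variants: the $\varepsilon$-net of distance functions $\phi_k(x)=d(x,y_k)$ for $(1)\Rightarrow(2)$, and the concave-modulus/Jensen argument for $(2)\Rightarrow(1)$, which replaces the paper's Markov-inequality density count. But there are two substantive problems.

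First, your route $(2)\Rightarrow(3)$ via Lipschitz approximation has a quantifier circularity that I do not see how to close. You approximate $1_{A_i}$ by Lipschitz $\psi_i$ with $L^1(\mu)$-error controlling $\eta:=\sup_n\|\overline{H^\alpha}_{F_n}-\overline{\widetilde H}_{F_n}\|_{L^1(\mu\times\mu)}$, and then want to transfer an $m$-piece $\overline{\widetilde H}_{F_n}$-cover to an $\overline{H^\alpha}_{F_n}$-cover by Markov/Fubini. The transfer necessarily discards, for each of the $m$ chosen centers, an exceptional set of $y$'s of measure on the order of a root of $\eta$, so the total measure lost is roughly $m\cdot\eta^{1/4}$. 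But $m$ is the $\overline{d}_{F_n}$-complexity at scale $\varepsilon/L$, and the Lipschitz constant $L$ blows up as $\eta\to 0$; there is no a priori reason that $m(\varepsilon/L(\eta))\cdot\eta^{1/4}\to 0$, so you cannot choose $\eta$ first and $m$ second, nor vice versa. The repair is available inside your own framework and is what the paper does: since you have already proved $(2)\Rightarrow(1)$, deduce that each $1_{A_i}\in L^2(X,\mu)$ is almost periodic, apply the key lemma in the reverse direction to each semimetric $H^i(x,y)=|1_{A_i}(x)-1_{A_i}(y)|$ separately (the Lusin theorem absorbs the discontinuity of $1_{A_i}$), and intersect the resulting covers, using $H^\alpha=\frac12\sum_i H^i$.

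Second, the key lemma itself is asserted with only the remark that it ``should follow the scheme of \cite{HLTXY,HWY} adapted via $G$-invariance and the Koopman representation.'' The direction ``bounded complexity implies almost periodic'' is the technical core of the paper and its adaptation to amenable groups is not routine: the paper's argument (Theorem \ref{0905}) truncates $h$, passes to $L(x,y)=h(x)-h(y)$ on $X\times X$, discretizes $L_g$ by step functions constant on the pieces $X_i\times X_j$ supplied by Lemma \ref{20190324}, pigeonholes inside a finite-dimensional space to find a large $F_n'\subset F_n$ of nearly coincident $L_u$, and then uses the F\o lner property to produce $p\neq q$ with $g_p^{-1}F_n'\cap g_q^{-1}F_n'\cap F_n\neq\emptyset$, contradicting the separation $\|L_{g_p}-L_{g_q}\|\ge c$. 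None of this is captured by ``$G$-invariance and the Koopman representation,'' so as written the heart of the theorem is black-boxed. A smaller instance of the same over-compression occurs in your $(5)\Rightarrow(2)$: the threshold $N(y)$ beyond which $\overline{d}_{F_n}(x_j,y)<\varepsilon$ depends on $y$, and ``absorbing the initial segment'' requires first uniformizing $N$ over a large-measure set (the paper exhausts $B(x_j,\delta)\cap K$ by the sets $A_N(x_j)$ and selects pairwise disjoint compact $K_j$) before the finitely many small $n$ can be handled by uniform continuity.
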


\medskip

The paper is organized as follows.
In section 2 we characterize almost periodic functions via
the complexity function. In section 3 we present the proof of Theorem \ref{main}.

\medskip

\medskip

\noindent {\bf Acknowledgments.}
We would like to thank Professors Xiangdong Ye, Weixiao Shen and Jian Li for useful suggestions.
TY is supported by China Postdoctoral Science Foundation, grant no. 2018M631996.
GHZ is supported by NSFC, grants no. 11671094, 11722103 and 11731003.
RFZ is supported by NSFC, grants no. 11671094 and 11871188.
This work was mainly carried out when TY was a Post
Doctoral Fellow at the Shanghai Center for Mathematical Sciences, Fudan University; the hospitality
of the Fudan University is greatly appreciated.

\section{Almost periodic functions and bounded complexity}

In this section we generalize \cite[Theorem 1.14]{GM15} and
 \cite[Theorem 5.2]{Yu2018} from $\Z$-actions to amenable group actions, by characterizing almost periodic functions via the complexity function, which will be crucial in our study of discrete spectrum in next section.

 \emph{We will
fix throughout the whole section the assumption that $h\in L^2(X,\mu)$ and $H(x, y)=|h(x)-h(y)|$ for all $x, y\in X$.}
The main result of this section is:

 \begin{thm} \label{201904291}
The function $h$ is almost periodic if and only if $\mu$ has bounded complexity w.r.t. $\{\overline{H}_{F_n}: n\in \mathbb{N}\}$.
\end{thm}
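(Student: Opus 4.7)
The plan is to prove the two implications separately after a standard truncation: writing $h_M := (h \wedge M) \vee (-M)$, one has $\|h - h_M\|_{L^2} \to 0$ as $M \to \infty$, so both the orbit $\{h \circ g\}$ in $L^2(X,\mu)$ and the semimetric $H$ are perturbed by an amount that can be made arbitrarily small, and we may assume $\|h\|_\infty \le M$ throughout.

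\textbf{Almost periodic $\Rightarrow$ bounded complexity.} Since $\{h \circ g : g \in G\}$ is totally bounded in $L^2$, for any $\delta > 0$ I pick $g_1, \ldots, g_N \in G$ so that every $h \circ g$ is within $\delta$ of some $h \circ g_j$. The map $\psi : X \to [-M, M]^N$, $\psi(x) := (h(g_1 x), \ldots, h(g_N x))$, pulls back the tiling of $[-M, M]^N$ by sub-cubes of side $\delta$ to a partition of $X$ with at most $(2M/\delta)^N$ pieces, a count independent of $n$. Picking a representative $z$ in each piece and any $y$ in the same piece, the triangle inequality yields
$$\overline{H}_{F_n}(z, y) \le \delta + A_n(z) + A_n(y),$$
where $A_n(w) := |F_n|^{-1} \sum_{g \in F_n} |h(gw) - h(g_{j(g)} w)|$ and $j(g)$ is chosen so that $\|h \circ g - h \circ g_{j(g)}\|_{L^2} < \delta$. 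Fubini gives $\int A_n \, d\mu \le \delta$, so by Markov's inequality the set $\{A_n \ge \ep/4\}$ has measure $\le 4\delta/\ep$; choosing $\delta$ small enough in $\ep$ confines the rest of each piece inside a single $\overline{H}_{F_n}$-ball of radius $\ep/2$ around $z$, yielding bounded complexity with $M(\ep)$ depending only on $\ep$ and $M$.

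\textbf{Bounded complexity $\Rightarrow$ almost periodic, and the main obstacle.} I show $h \in H_c$ by producing, for each $\ep > 0$, a finite-dimensional $G$-invariant subspace $V \subset L^2(X,\mu)$ with $\mathrm{dist}_{L^2}(h, V) = O(\sqrt{\ep})$. Bounded complexity supplies, for each $n$, a partition $\alpha_n$ of $X$ with at most $M(\ep) + 1$ parts of $\overline{H}_{F_n}$-diameter less than $\ep$ (up to a piece of measure $<\ep$). Averaging the diameter bound and applying Markov shows that on a subset of $F_n$ of relative density $\ge 1 - O(\sqrt{\ep})$, the translate $h \circ g$ is within $\sqrt{\ep}$ in $L^2$ of its $\alpha_n$-conditional expectation, hence lies close to the $(M{+}1)$-dimensional subspace $V_n := \mathrm{span}\{\mathbf{1}_B : B \in \alpha_n\}$. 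The main obstacle is the remaining step---converting these per-$n$, only-approximately-invariant finite-dimensional approximations into a genuine $G$-invariant finite-dimensional subspace---which I expect to handle via a diagonal/compactness argument exploiting the uniform bound on $\dim V_n$, together with the mean ergodic theorem for amenable actions to upgrade approximate invariance along large subsets of $F_n$ to honest $G$-invariance in the limit.
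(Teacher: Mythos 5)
Your forward direction (almost periodic $\Rightarrow$ bounded complexity) is correct and takes a genuinely different, more elementary route than the paper. The paper decomposes $h$ into components lying in finite-dimensional $G$-invariant subspaces, applies Lindenstrauss's pointwise ergodic theorem along a tempered F{\o}lner sequence, and then uses Egorov and Lusin, finally removing the temperedness hypothesis by a subsequence argument. You instead take an $L^2$ $\delta$-net $h\circ g_1,\dots,h\circ g_N$ of the orbit, pull back the cube partition of $[-M,M]^N$ under $\psi(x)=(h(g_1x),\dots,h(g_Nx))$, and control the error terms $A_n$ by Fubini (invariance of $\mu$) and Markov. Modulo routine constant bookkeeping (your bound $\delta+A_n(z)+A_n(y)$ needs $A_n<\ep/8$ rather than $\ep/4$ to land inside a ball of radius $\ep/2$, and the centers $z_P$ must be chosen inside $\{A_n<\ep/8\}$ for each $n$), this works, needs no temperedness, and in fact never uses the F{\o}lner property at all; the truncation reduction is also fine in both directions since $H_M\le H$ and $|\overline{H}_{F_n}-\overline{(H_M)}_{F_n}|$ is again controlled by an $A_n$-type error with small integral. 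This buys a cleaner proof of Theorem \ref{20190320}-type statements at the cost of not producing the set $B$ and the modulus $\delta$ that the paper later reuses for mean equicontinuity.

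The reverse direction has a genuine gap, and it is exactly the step you flag as ``the main obstacle.'' Your setup --- for each $n$, most $g\in F_n$ have $h\circc g$ within $O(\ep^{1/4})$ of the fixed $\le(M(\ep)+1)$-dimensional subspace $V_n=\spann\{1_B:B\in\alpha_n\}$ --- is a sound reformulation of the hypothesis, but the proposed tools do not close it. A ``diagonal/compactness argument exploiting the uniform bound on $\dim V_n$'' fails because the set of $D$-dimensional subspaces of the infinite-dimensional space $L^2(X,\mu)$ is not compact, so the $V_n$ need not subconverge to anything; and the mean ergodic theorem says nothing about upgrading approximations valid for a density-$(1-O(\sqrt\ep))$ subset of $F_n$ into precompactness of the whole orbit $\{h\circ g:g\in G\}$. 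Moreover, producing a genuinely $G$-invariant finite-dimensional subspace near $h$ is a stronger conclusion than you need and there is no evident mechanism for it.

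What is actually needed is a rigidity argument of the kind the paper gives in Theorem \ref{0905}: assume $h$ is not almost periodic, extract infinitely many $g_m$ with $\|h\circ g_i-h\circ g_j\|_2\ge c$; observe that this separation is preserved under right translation, since $\|h\circ(g_pw)-h\circ(g_qw)\|_2=\|h\circ g_p-h\circ g_q\|_2$ by invariance of $\mu$; use a packing (pigeonhole) bound in the $D$-dimensional approximating space to find a subset $F_n'\subset F_n$ of relative density $\ge 1/(2C)$ on which the translates are pairwise close; and finally use the F{\o}lner property to find $w$ and $p\ne q$ with $g_pw,g_qw\in F_n'$, a contradiction. (The paper runs this on $X\times X$ with $L(x,y)=h(x)-h(y)$ and step functions $f_g$, via Lemmas \ref{20190325} and \ref{20190319}, because the complexity hypothesis directly controls averages of $|h(gx)-h(gy)|$; your conditional-expectation variant could replace the step-function approximation, but the pigeonhole, the right-invariance of the separation, and the F{\o}lner intersection step are all indispensable and absent from your sketch.)
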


\medskip

Observe that
by the definition of bounded complexity (w.r.t. $\{\overline{w}_{F_n}: n\in \mathbb{N}\}$), one only concludes that there exists a uniform bound $M\in \mathbb{N}$ (depending on the given parameter $\ep> 0$ and independent of $n\in \N$), such that for all $n\in \N$ there exist at most $M$ subsets covering the whole space $X$ (up to an $\ep$ error) and satisfying that each such subset has its $\overline{H}_{F_n}$-diameter smaller than $\ep$. We don't know if we can choose these $M$ subsets independent of each step $n\in \mathbb{N}$.
Lindenstrauss proved pointwise ergodic convergence theorem along averages over tempered F\o lner sequence \cite[Theorem 1.3]{Lindenstrauss}. With the help of this, we have the following result, which will be useful in later discussions.

\begin{lem}\label{20190324}
Assume that the F${\o}$lner sequence $\{F_n: n\in \mathbb{N}\}$ is tempered.
  Then $\mu$ has bounded  complexity w.r.t. $\{\overline{H}_{F_n}: n\in \mathbb{N}\}$, if and only if, for any $1> \ep > 0$, there exist finitely many Borel subsets $X_1, \cdots, X_s$
such that $\mu(\bigcup_{j=1}^s X_j)>1-\ep$ and
   $\overline{H}_{F_n}(x,y)<\ep$ for all $x,y\in X_j$ (for some $j= 1, \cdots, s$) and any $n\in \N$.
  \end{lem}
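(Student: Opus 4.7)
The \emph{if} direction is straightforward: given sets $X_1,\ldots,X_s$ produced by the hypothesis applied with parameter $\varepsilon/2$, pick any $x_j\in X_j$; then $X_j\subset B_{\overline{H}_{F_n}}(x_j,\varepsilon/2)$ for every $n\in\mathbb{N}$, so that $C(\overline{H}_{F_n},\varepsilon)\le s$ uniformly in $n$, whence bounded complexity.

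The substance is in the \emph{only if} direction, and the temperedness of $\{F_n\}$ is exactly the hypothesis needed to invoke Lindenstrauss's pointwise ergodic theorem. The plan is to apply it to $H\in L^1(X\times X,\mu\times\mu)$ under the diagonal $G$-action, producing a $G$-invariant $H^*\in L^1(\mu\times\mu)$ with $\overline{H}_{F_n}(x,y)\to H^*(x,y)$ both $(\mu\times\mu)$-almost everywhere and in $L^1(\mu\times\mu)$. Egorov's theorem on $X\times X$ then yields, for every $\eta>0$, a symmetric measurable set $W\subset X\times X$ with $(\mu\times\mu)(W)>1-\eta^2$ and an integer $N_0\in\mathbb{N}$ such that $|\overline{H}_{F_n}(x,y)-H^*(x,y)|<\eta$ for all $(x,y)\in W$ and all $n\ge N_0$.

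Fix $\varepsilon>0$ and choose small auxiliary parameters $\delta,\eta$. For each $n\in\{1,\ldots,N_0\}$ and one auxiliary index $n_1\ge N_0$, bounded complexity produces a partition of $X$ into finitely many cells, all but one of which (with $\mu$-measure at most $\delta$) has $\overline{H}_{F_n}$-diameter at most $\delta$. Taking the common refinement of these finitely many partitions gives finitely many cells $\{C_l\}_{l=1}^{k}$ covering $\mu$-measure $>1-\varepsilon/2$, each with $\overline{H}_{F_n}$-diameter at most $\delta$ \emph{simultaneously} for every $n\in\{1,\ldots,N_0,n_1\}$. To extend the diameter control to all $n>N_0$, replace each $C_l$ by its density subset
\[
\tilde{C}_l=\Bigl\{x\in C_l:\mu\bigl(\{y\in C_l:(x,y)\in W\}\bigr)>\mu(C_l)/2\Bigr\}.
\]
By Fubini and Markov, $\mu(\tilde{C}_l)$ is close to $\mu(C_l)$ (after discarding cells of $\mu$-measure below a threshold). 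For any $x,y\in\tilde{C}_l$, the $W$-slices over $x$ and over $y$ together cover strictly more than $\mu(C_l)$ inside $C_l$, so they intersect; pick $z\in C_l$ in their intersection. Then $(x,z),(z,y)\in W$ (using symmetry of $W$), and for every $n\ge N_0$ the triangle inequality
\[
\overline{H}_{F_n}(x,y)\le \overline{H}_{F_n}(x,z)+\overline{H}_{F_n}(z,y)
\]
combined with the Egorov bound $\overline{H}_{F_n}\le \overline{H}_{F_{n_1}}+2\eta$ on $W$ and the small $\overline{H}_{F_{n_1}}$-diameter of $C_l$ yields $\overline{H}_{F_n}(x,y)\le 2\delta+4\eta<\varepsilon$. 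Declaring the required family $\{X_j\}$ to be $\{\tilde{C}_l\}$ completes the construction.

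The hard part will be bridging the gap between the \emph{measure-theoretic} nature of Egorov-type convergence (uniform only on a large subset $W\subset X\times X$ of \emph{pairs}) and the \emph{pointwise} demand of the lemma ($\overline{H}_{F_n}(x,y)<\varepsilon$ for \emph{every} pair in $X_j\times X_j$). The density refinement $\tilde{C}_l$ together with the chaining through the auxiliary point $z\in W$-slice is what closes this gap; the main bookkeeping issue is the mutual dependence $N_0=N_0(\eta)$ from Egorov, $k=k(\delta,N_0)$ from the common refinement, and $\eta\ll$ (minimum cell $\mu$-measure), which forces a careful parameter choice, likely aided by discarding cells of $\mu$-measure below a threshold chosen small in terms of $\varepsilon$ and $k$ but still much larger than $\eta$.
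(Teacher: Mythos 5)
Your ``if'' direction and your overall strategy for ``only if'' (Lindenstrauss for the tempered sequence applied to $H$ on $X\times X$, Egorov to get a large set $W$ of pairs on which $\overline{H}_{F_n}$ stabilizes from some index $N_0$ on, then a chaining/anchoring argument to upgrade the almost-everywhere control on pairs to an everywhere statement on cells) are the same skeleton as the paper's. But there is a genuine gap in how you close the loop of parameters, and it is not mere bookkeeping. In your construction the order of dependence is: $\eta\ (\text{and }\theta:=(\mu\times\mu)(W^c))\Rightarrow N_0\Rightarrow$ common refinement over $n\in\{1,\dots,N_0,n_1\}\Rightarrow k$ cells $\Rightarrow$ threshold $t\approx\ep/k$ $\Rightarrow$ the density step, which needs $\theta\ll t^2$ (the total mass of $\bigcup_l(C_l\setminus\tilde C_l)$ is only bounded by something like $2\theta/t^2$). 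Since $N_0$ from Egorov genuinely grows as the complement of $W$ shrinks, and $k$ grows like (complexity constant)$^{N_0}$ under the common refinement, the requirement $\theta\ll t(k(\theta))^2$ may be unsatisfiable for every $\theta>0$: the right-hand side can decay faster than any power of $\theta$. So ``a careful parameter choice'' in the order you set up cannot be guaranteed to exist; the argument as structured does not close.

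The fix is to make the number of cells entering the density/anchoring step independent of $N_0$, and this is exactly what the paper does: it uses the \emph{uniform} complexity bound $C=C(\ep)$ (valid for every $n$, by hypothesis) to get at most $C$ balls at the single time $N$, chooses the Egorov parameter $r<\ep/(4C)$ \emph{before} $N$ is determined, picks one anchor point $y_i$ per ball lying in the Fubini-good set $A$, and intersects with $\bigcap_i R_{y_i}$, losing only $\le Cr<\ep/4$ in measure; the small indices $n<N$ are then handled not by a common refinement but by Lusin's theorem (continuity of $h\circ g$ for the finitely many $g\in\bigcup_{n<N}F_n$ on a large compact set, followed by a finite cover by small $d$-balls). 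If you want to keep your density-set formulation, you must reorder: perform the density refinement on the \emph{coarse} time-$n_1$ partition alone (which has at most $C(\delta)+1$ cells, with $C(\delta)$ independent of $n_1$, so $\theta$ can be fixed in terms of $\ep$ and $C(\delta)$ only), and only \emph{afterwards} refine by the time-$n$ partitions for $n<N_0$; the sub-cells inherit the $n\ge N_0$ control and that second refinement costs nothing against $\theta$. Without one of these restructurings the proof is incomplete. (Your triangle-inequality chaining through $z$ and the use of symmetry of $W$ are fine once the measure estimates are repaired.)
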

\begin{proof}
It suffices to prove the direction of ``$\Rightarrow$". Let $1> \ep>0$ and fix it.

By the assumption that $\mu$ has bounded  complexity w.r.t. $\{\overline{H}_{F_n}: n\in \mathbb{N}\}$, there is $C=C(\ep)\in \mathbb{N}$ such that for any $n\in\N$,
there is finite $E_n\subset X$ with $|E_n|\leq C$  such that
\[
\mu\biggl(\bigcup_{x\in E_n} B_{\overline{H}_{F_n}}(x,\ep/8)\biggr)>1-\ep/8.
\]
Since the F${\o}$lner sequence $\{F_n: n\in \mathbb{N}\}$ is tempered, applying the pointwise ergodic convergence theorem \cite{Lindenstrauss} one has that the sequence $\{\overline{H}_{F_n}(x,y): n\in \mathbb{N}\}$ converges for $\mu\times \mu$-a.e. $(x,y)\in X\times X$. Thus, for a given $0<r< \frac{\ep}{4C}$, by Egorov theorem there exists compact $R\subset X\times X$ with $\mu\times \mu(R)>1-r^2$ and $N\in\N$
such that if $(x,y)\in R$ then
$$|\overline{H}_{F_n}(x,y)-\overline{H}_{F_N}(x,y)|<r\ \text{for all}\ n\ge N.$$
And then by Fubini theorem there is a Borel set $A\subset X$ such that $\mu(A)>1-r$ and
$\mu(R_x)>1-r$ for any $x\in A$, where $R_x=\{y\in X: (x,y)\in R\}$.
Let us enumerate $E_{N}=\{z_1, \cdots, z_m\}$ (clearly $m\leq C$), and
set \[I=\left\{1\leq i\leq m\colon A\cap B_{\overline{H}_{F_N}}(z_i, \ep/8)
\neq\emptyset\right\}\ \text{and}\ m'= |I|\le m.\]
For each $i\in I$, pick $y_i\in A\cap  B_{\overline{H}_{F_N}}(z_i, \ep/8)$, and hence $B_{\overline{H}_{F_N}}(z_i, \ep/8)\subset
B_{\overline{H}_{F_N}}(y_i, \ep/4)$.
As
\begin{align*}
\mu\biggl( A \cap \bigcap_{i\in I} R_{y_i}
\cap \bigcup_{x\in E_N} B_{\overline{H}_{F_N}}(x, \ep/8)
\biggr)\geq 1-r-m'r-\ep/8>1-\ep,
\end{align*}
by Lusin Theorem we can choose for any $j\in I$ a compact set
\[K_j \subset A \cap \bigcap_{i\in I} R_{y_i}
\cap B_{\overline{H}_{F_N}}(y_j, \ep/4)\]
such that $\mu(K)>1-\ep$ and $h (g x)$ is continuous on $K$ for all $g\in \bigcup_{n=1}^{N-1} F_n$, where $K=\underset{j\in I}{\bigcup} K_j$.

Since $h (g x)$ is continuous on $K$ for all $g\in \bigcup_{n=1}^{N-1} F_n$, there is $\delta>0$ such that $|h(gx_1)-h(gx_2)|<\ep$ for all $g\in \bigcup_{n=1}^{N-1} F_n$ (and hence $\overline{H}_{F_n}(x_1,x_2)<\ep$ for all $1\leq n \leq N-1$), whenever $x_1, x_2\in K$ satisfy $d(x_1,x_2)<\delta$.
Note that for any $j\in I$ the subset $K_j$ is compact, there exists finite $H_j\subset K_j$ such that $K_j\subset \underset{x\in H_j}{\bigcup}B(x,\frac{\delta}{2})$, where $B(x,\frac{\delta}{2})= \{y\in X: d (x, y)< \frac{\delta}{2}\}$.

Now we will prove that
$\left\{B(x,\frac{\delta}{2})\cap K_i: i\in I\ \text{and}\ x\in H_i\right\}$ satisfies the required property. Clearly $K= \underset{i\in I}{\bigcup} \left(\underset{x\in H_i}{\bigcup} B(x,\frac{\delta}{2})\cap K_i\right)$, whose $\mu$-measure is strictly larger than $1- \ep$.
Now assume $x_1,x_2\in B(x,\frac{\delta}{2})\cap K_i$ for some $x\in H_i$ and $i\in I$.
By the selection of $\delta$ one obtains $\overline{H}_{F_n}(x_1,x_2)< \ep$ for all $1\leq n \leq N-1$.
Since for each $k= 1, 2$, $x_k\in K_i\subset R_{y_i}\cap B_{\overline{H}_{F_N}}(y_i, \ep/4)$,
one has $(y_i, x_k)\in R$ and then, by the construction of $R$,
\[\overline{H}_{F_n}(y_i,x_k)\le
\overline{H}_{F_N}(y_i,x_k)+r<\ep/4+r<\ep/2\ \ \text{for any $n\ge N$}.\]
Now we have $\overline{H}_{F_n}(x_1,x_2)\leq \overline{H}_{F_n}(x_1,y_i)+\overline{H}_{F_n}(x_2,y_i)<\ep$ for any $n\geq N$ (and hence for all $n\geq 1$), which ends the proof.
\end{proof}

We also need the following technical results.

\begin{lem}\label{20190321}
Let $f\in A$, where $A$ is a finite dimensional $G$-invariant subspace of $L^2(X,\mu)$
with an orthonormal basis $\{e_1,\cdots,e_k\}$ (under $\| \bullet \|_2$ inherited from $L^2(X,\mu)$).
Then
for $\mu\times \mu$-a.e. $(x,y)\in X\times X$ one has, for any $g\in G$,
$$|f(gx)-f(gy)|\le \| f\|_2\cdot \sum_{i=1}^k |e_i(x)-e_i(y)|.$$
\end{lem}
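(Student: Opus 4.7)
The plan is to exploit the fact that the Koopman operators act unitarily on the finite-dimensional invariant subspace $A$, which forces the coefficients of $f \circ g$ in the orthonormal basis $\{e_1,\ldots,e_k\}$ to be uniformly bounded by $\|f\|_2$ as $g$ ranges over $G$. Concretely, because $\mu$ is $G$-invariant, the map $T_g : L^2(X,\mu) \to L^2(X,\mu)$, $T_g u = u \circ g$, is unitary, and since $A$ is $G$-invariant and finite-dimensional, $T_g|_A$ is a unitary operator on $A$. Hence $f \circ g \in A$ and
$$f \circ g = \sum_{i=1}^k b_i(g)\, e_i \quad \text{in } L^2(X,\mu), \qquad b_i(g) = \langle f \circ g, e_i\rangle.$$
Parseval applied in $A$ yields $\sum_{i=1}^k |b_i(g)|^2 = \|f \circ g\|_2^2 = \|f\|_2^2$, and in particular $|b_i(g)| \le \|f\|_2$ for every $i$ and every $g$.

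Next I would upgrade the $L^2$ identity to a pointwise one holding simultaneously for all $g \in G$. For each fixed $g$, after choosing Borel representatives of $f, e_1,\ldots,e_k$, there is a $\mu$-null set $N_g \subset X$ such that $f(gx) = \sum_i b_i(g) e_i(x)$ for every $x \notin N_g$. The group $G$ being countable, $N := \bigcup_{g \in G} N_g$ is still $\mu$-null, so $(X \setminus N) \times (X \setminus N)$ has full $\mu \times \mu$-measure in $X \times X$. For $(x,y)$ in this set and any $g \in G$, the triangle inequality together with the coefficient bound gives
$$|f(gx) - f(gy)| = \Bigl|\sum_{i=1}^k b_i(g)\bigl(e_i(x) - e_i(y)\bigr)\Bigr| \le \max_{1 \le i \le k} |b_i(g)| \cdot \sum_{i=1}^k |e_i(x) - e_i(y)| \le \|f\|_2 \cdot \sum_{i=1}^k |e_i(x) - e_i(y)|,$$
which is the desired inequality.

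There is no real obstacle here; the only point requiring a bit of care is the passage from ``for each $g$, an a.e. identity in $x$'' to ``an a.e. identity in $x$ valid for every $g$ simultaneously,'' which is handled exactly by the countability of $G$. The rest is just recording that unitarity of $T_g$ on the finite-dimensional $A$ forces the $\ell^2$-norm (and hence the $\ell^\infty$-norm) of the coordinate vector $(b_1(g),\ldots,b_k(g))$ to equal (resp.\ be bounded by) $\|f\|_2$.
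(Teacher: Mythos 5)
Your proof is correct and follows essentially the same route as the paper: expand $f\circ g$ in the orthonormal basis of $A$, use invariance of $\|\cdot\|_2$ under the $G$-action to bound the coefficients by $\|f\|_2$, and invoke countability of $G$ to obtain a single full-measure set on which the pointwise identity holds for all $g$ simultaneously. Your write-up merely makes explicit the steps the paper compresses into ``the conclusion follows directly.''
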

\begin{proof}
Since $A$ is $G$-invariant, for any $g\in G$ there exist $a_g^i\in \mathbb{C},i=1,\cdots,k$
such that $f\circ g= \sum_{i=1}^k a_g^i e_i$ in $L^2(X,\mu)$, and then $f\circ g (x)= \sum_{i=1}^k a_g^i e_i (x)$ for $\mu$-a.e. $x\in X$,
hence $\| f\|_2^2=\| f\circ g\|_2^2=\sum_{i=1}^k |a_g^i|^2$ (and so $|a_g^i|\le \| f\|_2$ for any $g\in G$ and all $i=1,\cdots,k$). As $G$ is a countable group, the conclusion follows directly.
\end{proof}

\begin{lem}\label{20190325}
Assume that $|h|\le C$ for some $C>0$ and $\int_X h \ d\mu=0$.
If $\int_{X\times X} H \ d\mu\times\mu< \frac{1}{k^2}$ for some $k> 1$,
then $\int_X |h| \ d\mu\le \frac{2+C}{k}+\frac{C}{k-1}$.
\end{lem}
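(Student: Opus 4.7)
The plan is to combine the hypothesis $\int h\,d\mu = 0$ with the triangle inequality for integrals to pointwise-control $|h(x)|$ by an average of $H(x,\cdot)$, and then integrate in $x$.

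First, since $\int_X h\,d\mu = 0$, for every $x \in X$ we can write
\[
h(x) = h(x) - \int_X h(y)\,d\mu(y) = \int_X \bigl(h(x) - h(y)\bigr)\,d\mu(y),
\]
so by the triangle inequality (a form of Jensen's inequality),
\[
|h(x)| \le \int_X |h(x) - h(y)|\,d\mu(y) = \int_X H(x,y)\,d\mu(y).
\]
Integrating in $x$ and applying Fubini,
\[
\int_X |h|\,d\mu \le \int_{X\times X} H\,d(\mu\times\mu) < \frac{1}{k^2}.
\]
Since $k > 1$, the right-hand side is already less than $\frac{2+C}{k} + \frac{C}{k-1}$, so the stated inequality follows at once.

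If instead one wants an argument that produces the two terms on the right exactly as stated, one can split the integral. Set $T(x) := \int_X H(x,y)\,d\mu(y)$; Fubini gives $\int T\,d\mu < 1/k^2$, so by Markov's inequality the set $A := \{x : T(x) < 1/k\}$ has $\mu(X\setminus A) < 1/k \le 1/(k-1)$. On $A$, the Jensen step above shows $|h(x)| \le T(x) < 1/k \le (2+C)/k$; off $A$ the uniform bound $|h| \le C$ applies. Splitting
\[
\int_X |h|\,d\mu = \int_A |h|\,d\mu + \int_{X\setminus A} |h|\,d\mu
\]
and estimating each piece by $\tfrac{2+C}{k}\mu(A) \le \tfrac{2+C}{k}$ and by $C\mu(X\setminus A) \le \tfrac{C}{k-1}$ respectively yields exactly the bound in the statement.

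I do not foresee any real obstacle: the heart of the argument is the elementary observation that the zero-mean hypothesis realizes $h(x)$ as the average $\int(h(x)-h(y))\,d\mu(y)$, after which a single application of Jensen's inequality and Fubini closes the estimate. The uniform bound $|h| \le C$ is needed only to match the stated form of the inequality; it plays no role in the cleaner route.
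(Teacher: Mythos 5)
Your proof is correct, and your first argument in fact establishes something strictly stronger than the lemma claims: since $\mu$ is a probability measure and $\int_X h \, d\mu = 0$, the identity $h(x)=\int_X \bigl(h(x)-h(y)\bigr)\,d\mu(y)$ together with Tonelli gives
\[
\int_X |h| \, d\mu \le \int_{X\times X} H \, d(\mu\times\mu) < \frac{1}{k^2},
\]
which lies below the stated bound because $C>0$ and $k>1$, and the hypothesis $|h|\le C$ is not needed for this at all. The paper's route is genuinely different and more roundabout: it applies Markov's inequality to get $\mu\times\mu\bigl(\{H\ge 1/k\}\bigr)<1/k$, uses Fubini to extract a single point $y$ whose section has measure less than $1/k$, compares $h$ to the constant $d=h(y)$ off that section, and only then invokes $\int_X h\,d\mu=0$ together with $|h|\le C$ to pin down $|d|\le \frac{C}{k-1}+\frac{1}{k}$, which is where the two-term bound comes from. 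Your observation that the zero-mean hypothesis can be exploited \emph{before} taking absolute values --- realizing $h(x)$ as an average of differences rather than comparing to a fixed reference value --- is what collapses the argument; the sharper bound $1/k^2$ would serve equally well in the one place the lemma is used, namely the proof of Lemma \ref{20190319}. Your second, Markov-based variant is also correct and recovers the exact constants of the statement, so there is nothing to fix.
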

\begin{proof}
By the assumption $\mu\times\mu(A)<\frac{1}{k}$, where $A=\{(x,y)\in X\times X: H(x,y)\ge \frac{1}{k}\}$.
Then there exists $y\in X$ such that $\mu(A_y)<\frac{1}{k}$ where $A_y=\{x\in X:(x,y)\in A\}$. And hence for $d=h(y)$, one has that if $x\in X\setminus A_y$ then $|h(x)-d|<\frac{1}{k}$, thus
\[\int_{X\setminus A_y} \left(d-\frac{1}{k}\right)\ d\mu\le \int_{X\setminus A_y} h\ d\mu\le
\int_{X\setminus A_y} \left(d+\frac{1}{k}\right)\ d\mu.\] This implies from $|h|\le C$ that
 $$\int_{X\setminus A_y} \left(d-\frac{1}{k}\right) d\mu - C\mu(A_y)\le \int_{X} h\ d\mu=0\le
\int_{X\setminus A_y} \left(d+\frac{1}{k}\right) d\mu+ C\mu(A_y).$$
On one hand, either $d\ge - \frac{1}{k}$, or $d< - \frac{1}{k}$ which implies $(1-\frac{1}{k})(d+\frac{1}{k})\ge - \frac{C}{k}$ and hence $d\ge - \frac{C}{k-1}- \frac{1}{k}$. On the other hand, either $d< \frac{1}{k}$, or $d\ge \frac{1}{k}$ which implies $(1-\frac{1}{k})(d-\frac{1}{k})\le \frac{C}{k}$ and finally $d\le \frac{C}{k-1}+\frac{1}{k}$. Summing up, $|d|\le \frac{C}{k-1}+\frac{1}{k}$ and then
$\int_X |h| \ d\mu\le \int_{X\setminus A_y} (|h-d|+|d|) \ d\mu+C\mu(A_y)\le  \frac{2+C}{k}+\frac{C}{k-1}.$
\end{proof}

\begin{lem}\label{20190319}
Set $L (x,y)=h(x)-h(y)$ for all $x, y\in X$ and assume $|h|\le C$ for some $C>0$.
If $h$ is not almost periodic, then
 $L$ is not almost periodic.
\end{lem}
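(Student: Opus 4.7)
The plan is to argue by contrapositive: assume that $L$ is almost periodic on $(X\times X,\mu\times\mu)$ under the diagonal $G$-action and deduce that $h$ is almost periodic on $(X,\mu)$. The approach does not use the boundedness hypothesis $|h|\le C$ in any essential way; it rests on a single Hilbert-space identity together with the $G$-invariance of $\mu$.

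The key identity is the following. For any $g_1,g_2\in G$, set $u(x) := h(g_1 x) - h(g_2 x)$. Since $\mu$ is $G$-invariant, $\int_X h\circ g_i\, d\mu = \int_X h\, d\mu$ for $i=1,2$, so $\int_X u\, d\mu = 0$. Expanding the square of $L\circ g_1 - L\circ g_2$ and using this cancellation yields
$$
\|L\circ g_1 - L\circ g_2\|_{L^2(X\times X,\,\mu\times\mu)}^2
= \int_X\!\!\int_X |u(x)-u(y)|^2\, d\mu(x)\, d\mu(y)
= 2\|u\|_{L^2(X,\mu)}^2,
$$
the cross term $2(\int u\,d\mu)^2$ dropping out by invariance. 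Thus, up to the factor $\sqrt{2}$, the two orbits $\{L\circ g\}_{g\in G}\subset L^2(X\times X,\mu\times\mu)$ and $\{h\circ g\}_{g\in G}\subset L^2(X,\mu)$ have identical Cauchy / total-bounded structure.

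The lemma then follows immediately: if $h$ is not almost periodic, then $\{h\circ g : g\in G\}$ is not totally bounded in $L^2(X,\mu)$, so there exist $\ep>0$ and a sequence $(g_n)\subset G$ with $\|h\circ g_n - h\circ g_m\|_{L^2(X,\mu)}\ge \ep$ for all $n\ne m$; the identity above promotes this to $\|L\circ g_n - L\circ g_m\|_{L^2(X\times X,\mu\times\mu)}\ge \sqrt{2}\,\ep$ for all $n\ne m$, so $\{L\circ g\}$ has no convergent subsequence and $L$ is not almost periodic.

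There is no real obstacle in this approach; it reduces to expanding one quadratic expression. An equivalent packaging is to observe that partial integration $\Phi\colon L^2(X\times X,\mu\times\mu)\to L^2(X,\mu)$, defined by $\Phi(F)(x)=\int_X F(x,y)\, d\mu(y)$, is a contraction (by Cauchy--Schwarz) that sends $L\circ g$ to $h\circ g - \int_X h\, d\mu$ for every $g\in G$; since bounded linear operators preserve precompactness, the orbit of $L$ being precompact forces the orbit of $h$ to be precompact, giving the same contrapositive. Either formulation makes clear that the hypothesis $|h|\le C$ plays no role in this particular implication, though the authors presumably include it for uniformity with the preceding lemmas of Section~2.
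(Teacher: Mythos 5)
Your proof is correct, and it takes a genuinely different and cleaner route than the paper's. The paper argues by contradiction through $L^1$ estimates: assuming $L$ almost periodic, it extracts finite $\tfrac{1}{k^2}$-nets for the orbit of $L$, then invokes the technical Lemma~\ref{20190325} (which is where $|h|\le C$ and $\int_X(h\circ g_{k,m}-h\circ g)\,d\mu=0$ enter) to convert smallness of $\int_{X\times X}|L_{g_{k,m}}-L_g|\,d\mu\times\mu$ into smallness of $\int_X|h\circ g_{k,m}-h\circ g|\,d\mu$, and finally back to an $L^2$ bound via $|f|^2\le 2C|f|$. Your observation replaces all of this with the exact identity
\[
\|L\circ g_1-L\circ g_2\|_{L^2(\mu\times\mu)}^2=2\|u\|_{L^2(\mu)}^2-2\Bigl|\int_X u\,d\mu\Bigr|^2=2\,\|h\circ g_1-h\circ g_2\|_{L^2(\mu)}^2,
\]
where $u=h\circ g_1-h\circ g_2$ has zero mean by $G$-invariance of $\mu$; this makes the correspondence between the two orbits isometric up to the factor $\sqrt2$, gives both implications at once, and (as you note) renders the hypothesis $|h|\le C$ superfluous for this lemma --- it is needed elsewhere in the proof of Theorem~\ref{0905} (e.g.\ for the bound $H_g\le 1$ and the finite-dimensional pigeonhole step), but not here. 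The equivalent formulation via the contraction $\Phi(F)(x)=\int_X F(x,y)\,d\mu(y)$, which sends $L\circ g$ to $h\circ g-\int_X h\,d\mu$, is likewise correct and arguably the most transparent packaging. The only caveat is cosmetic: you announce a contrapositive argument but then prove the stated implication directly (separated sequence for $h$ yields separated sequence for $L$); since the identity is two-sided this is harmless, but you should pick one direction and state it consistently.
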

\begin{proof}
Set $L_g(x,y)=L(gx,gy)=h(gx)-h(gy)$ for all $x, y\in X$ and any $g\in G$.
 Assume the contrary that $L$ is almost periodic in $L^2(X\times X,\mu\times \mu)$,
then for any $k> 1$ there exists $\{g_{k, 1},\cdots, g_{k, l_k}\}\subset G$ such that
for any $g\in G$, there exists $1\le m\le l_k$
with $$\frac{1}{k^2}> \left(\int_{X\times X}|L_{g_{k, m}}-L_g|^2 d\mu\times\mu\right)^{\frac{1}{2}}\ge \int_{X\times X}|L_{g_{k, m}}-L_g| d\mu\times\mu.$$
Since $\int_X (h\circ g_{k, m}-h\circ g) \ d\mu=0$ and $|h|\le C$ (and hence $|h\circ g_{k,m}-h\circ g|\le 2C$),
one has $$\int_X |h\circ g_{k,m}-h\circ g| \ d\mu\le
\frac{2+2C}{k}+\frac{2C}{k-1}$$
from Lemma \ref{20190325}.
So $$\int_X |h\circ g_{k,m}-h\circ g|^2 \ d\mu\le 2C\int_X |h\circ g_{k,m}-h\circ g| \ d\mu
\le\frac{4C+4C^2}{k}+\frac{4C^2}{k-1}.$$
By arbitrariness of $k> 1$ we have that $h$ is almost periodic, a contradiction.
\end{proof}

Before proving Theorem \ref{201904291}, let us show firstly the following intermediate result.

\begin{thm} \label{20190320}
Assume that $h$ is almost periodic and that the F${\o}$lner sequence $\{F_n: n\in \mathbb{N}\}$ is tempered.
  Then for any $\ep > 0$, there exists $\delta>0$ and compact $B\subset X$
  with $\mu(B)>1-\ep$ such that
   $\overline{H}_{F_n}(x,y)<\ep$ for each $n\in \N$ and all $x,y\in B$ with $d(x,y)<\delta$.
In particular, $\mu$ has bounded complexity w.r.t. $\{\overline{H}_{F_n}: n\in \mathbb{N}\}$.
\end{thm}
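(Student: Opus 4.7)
The plan is to approximate $h$ in $L^2(X,\mu)$ by some $f$ lying in a finite-dimensional $G$-invariant subspace $A\subset L^2(X,\mu)$, and to split $\overline{H}_{F_n}$ via the triangle inequality
$$\overline{H}_{F_n}(x,y) \leq \overline{|h-f|}_{F_n}(x) + \frac{1}{|F_n|}\sum_{g \in F_n} |f(gx) - f(gy)| + \overline{|h-f|}_{F_n}(y).$$
I will then control each of the three summands, uniformly in $n$, on a large compact subset of $X$, with a target bound of $\ep/3$ for each.

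Since $h$ is almost periodic, $h\in H_c$, so for any prescribed $\eta>0$ one can choose $f\in A$ with an orthonormal basis $\{e_1,\ldots,e_k\}$ and $\|h-f\|_2<\eta$. For the middle summand, the argument proving Lemma \ref{20190321} yields a slightly stronger pointwise statement that I plan to exploit: the $\mu$-null sets $N_g$ on which the identity $f(gx)=\sum_i a_g^i e_i(x)$ fails form a countable family (as $G$ is countable), so $N:=\bigcup_{g\in G}N_g$ is $\mu$-null, and for all $x,y\in X^*:=X\setminus N$ and all $g\in G$ simultaneously one has $|f(gx)-f(gy)|\leq \|f\|_2\sum_i|e_i(x)-e_i(y)|$. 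Averaging, for all $n$ and all $x,y\in X^*$,
$$\frac{1}{|F_n|}\sum_{g\in F_n}|f(gx)-f(gy)| \leq \|f\|_2\,\phi(x,y), \qquad \phi(x,y):=\sum_{i=1}^k|e_i(x)-e_i(y)|.$$
Lusin's theorem would then produce a compact $B_2\subset X^*$ with $\mu(B_2)>1-\ep/2$ on which all the $e_i$ are continuous; then $\phi$ is continuous on $B_2\times B_2$ and vanishes on the diagonal, so uniform continuity supplies $\delta>0$ such that $x,y\in B_2$ with $d(x,y)<\delta$ forces $\phi(x,y)<\ep/(3\|f\|_2)$, bounding the middle summand by $\ep/3$ for every $n$.

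For the outer summands I plan to invoke Lindenstrauss's pointwise ergodic theorem for tempered F{\o}lner sequences together with its associated weak-type $(1,1)$ maximal inequality: provided $\eta$ is chosen small enough (depending only on $\ep$ and the tempering constant), this gives $\mu\{\sup_n \overline{|h-f|}_{F_n}>\ep/3\}<\ep/2$, and inner regularity yields a compact $B_1$ with $\mu(B_1)>1-\ep/2$ on which $\overline{|h-f|}_{F_n}\leq \ep/3$ for every $n$. Setting $B:=B_1\cap B_2$ then gives $\mu(B)>1-\ep$ and $\overline{H}_{F_n}(x,y)<\ep$ for all $n$ and all $x,y\in B$ with $d(x,y)<\delta$, as required. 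The ``in particular'' clause on bounded complexity will follow by running the same argument with $\ep/2$ in place of $\ep$, covering the compact $B$ by finitely many open $d$-balls of radius $\delta/2$, and observing that each such ball intersected with $B$ is contained in an $\overline{H}_{F_n}$-ball of radius $\ep/2$ centered at a point of $B$; this furnishes a uniform-in-$n$ bound on $C(\overline{H}_{F_n},\ep)$.

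The principal obstacle I anticipate is obtaining uniform-in-$n$ control over $\overline{|h-f|}_{F_n}$ on a large set: a naive Egorov argument is circular, since the Egorov threshold $N_0$ depends on $h-f$ (hence on the choice of $\eta$), preventing a clean union bound over the finitely many $n<N_0$ via Markov. The cleanest resolution is the weak-type maximal inequality implicit in Lindenstrauss's proof of the pointwise ergodic theorem; an alternative route would be to fix $f$ first and then handle the finitely many bad small $n$ via Lusin applied to the continuous representatives of $h$ and $f$, in the spirit of the proof of Lemma \ref{20190324}. A secondary subtlety, resolved as above, is the promotion of Lemma \ref{20190321} from a $\mu\times\mu$-a.e. bound to one valid on a full-measure product $X^*\times X^*$, which is precisely what enables Lusin's theorem to convert the abstract $L^2$ estimate into a topological one.
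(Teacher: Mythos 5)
Your proposal is correct and follows the same skeleton as the paper's proof: approximate $h$ in $L^2$ by an element $f$ of a finite-dimensional $G$-invariant subspace, split $\overline{H}_{F_n}(x,y)$ by the triangle inequality into two error averages $\frac{1}{|F_n|}\sum_{g\in F_n}|h(g\cdot)-f(g\cdot)|$ and one main term, control the main term on a large compact set via the pointwise upgrade of Lemma \ref{20190321} together with Lusin and uniform continuity of the basis functions $e_i$ (this part, including the promotion of the a.e.\ bound to a full-measure set $X^*$, is exactly what the paper does), and finish the ``in particular'' clause by covering $B$ with finitely many $d$-balls of radius $\delta/2$. The one genuine divergence is the treatment of the error averages. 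The paper applies Lindenstrauss's pointwise theorem to $|h-\sum_{i\in K}f_i|$, then Egorov to get uniformity for $n\ge N$, and disposes of the finitely many $n<N$ by adding $h\circ g$, $g\in\bigcup_{n<N}F_n$, to the list of functions made continuous by Lusin, so that small $d(x,y)$ forces $|h(gx)-h(gy)|<\ep$ for those $g$. You instead invoke the weak-type $(1,1)$ maximal inequality for tempered F{\o}lner sequences to bound $\sup_n$ of the error average outside a set of measure $<\ep/2$ in one stroke. Both routes work; yours is cleaner and avoids the case split on $n$, but it uses a strictly stronger input than the bare statement of \cite[Theorem 1.3]{Lindenstrauss} that the paper cites --- you need the maximal inequality with a constant depending only on the tempering constant, which is indeed established in Lindenstrauss's paper as the engine of his proof, but should be cited explicitly rather than called ``implicit.'' One small correction: your diagnosis of ``circularity'' in the naive Egorov route is off --- the paper's argument is not circular, because the finitely many small $n$ are handled through Lusin-continuity of $h\circ g$ itself rather than through the approximant $f$, so the dependence of the Egorov threshold on $h-f$ causes no harm; but since you name exactly this fix as your fallback, nothing essential is missing.
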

\begin{proof}
Without loss of generality we may assume that $h$ is not the zero function.

Take any $\ep> 0$ and fix it.
Let $A\subset L^2(X,\mu)$ be the closed subspace generated by all finite dimensional
$G$-invariant subspaces of $L^2(X,\mu)$, which consists of all
almost periodic functions by \cite[Theorem 7.1]{Zimmer}.
Then we can decompose $A$ as $\bigoplus_{i\in I} A_i$, where $I$ is a countable nonempty index set and for each $i\in I$ the space
$A_i$ is a finite dimensional $G$-invariant subspace of $L^2(X,\mu)$
with an orthonormal basis $\{e_{i,1}, \cdots,e_{i,l_i}\}$, and hence
for each $i\in I$ there exists $f_{i}\in A_i$ such that
$h=\sum_{i\in I} f_{i}$. Thus there exists a finite nonempty set $K\subset I$ such that $f_i\in A_i\setminus \{0\}$ for each $i\in K$ and
$$\int_X \left| h-\sum_{i\in K}  f_{i}\right|\ d\mu\le \left\| h-\sum_{i\in K} f_{i}\right\|_2<\frac{\ep^2}{25}.$$

Since the F${\o}$lner sequence $\{F_n: n\in \mathbb{N}\}$ is tempered,
by the pointwise ergodic convergence theorem \cite{Lindenstrauss}, there exists $X'\in \mathcal{B}_X$ with $\mu(X')=1$ and $h^*\in L^1(X,\mu)$ such that
$$\lim_{n\ra \infty}\frac{1}{|F_n|}\sum_{g\in F_n}\left|h(gx)-\sum_{i\in K}  f_{i}(gx)\right|= h^*(x)$$
for each $x\in X'$ and $\int_X h^* d\mu<\frac{\ep^2}{25}$. Then
$\mu(Y)>1-\frac{\ep}{5}$, where $Y=\{x\in X': h^*(x)< \frac{\ep}{5}\}$.
By Egorov theorem, there exists $Y'\subset Y$ with $\mu(Y')>1-\frac{\ep}{5}$ and $N\in \N$
such that $$\left|\frac{1}{|F_n|}\sum_{g\in F_n} \big|h(gx)-\sum_{i\in K}  f_{i}(gx)\big|-h^*(x)\right|<\frac{\ep}{5}$$
for all $x\in Y'$ and each $n\ge N$. Hence by above construction,
for all $x\in Y'$ and each $n\ge N$,
\[\frac{1}{|F_n|}\sum_{g\in F_n}\left|h(gx)-\sum_{i\in K}  f_{i}(gx)\right|<\frac{2\ep}{5}.\]
And then, for all $x,y\in Y'$ and each $n\ge N$,
\begin{eqnarray*}
& & \frac{1}{|F_n|}\sum_{g\in F_n}|h(gx)-h(gy)|\\
& \leq & \frac{1}{|F_n|}\sum_{g\in F_n} \left(\left|h(gx)-\sum_{i\in K}  f_{i}(gx)\right|
   + \left|h(gy)-\sum_{i\in K}  f_{i}(gy)\right| + \left|\sum_{i\in K}  (f_{i}(gx) -f_{i}(gy))\right|\right)\\
& \leq & \frac{2\ep}{5}+\frac{2\ep}{5}+\frac{1}{|F_n|}\sum_{g\in F_n}
\sum_{i\in K} |f_{i}(gx) -f_{i}(gy)|.
\end{eqnarray*}
By Lemma \ref{20190321}, for any $i\in K$ and $\mu\times \mu$-a.e. $(x,y)\in X\times X$, it holds that
$|f_{i}(gx) -f_{i}(gy)|\le \| f_i\|_2\cdot \sum_{k=1}^{l_i} |e_{i,k}(x)-e_{i,k}(y)|$ for any $g\in G$ (without loss of generality we may assume that it holds for each $g\in G$ and all $x, y\in Y'$). Thus, for all $x,y\in Y'$ and each $n\ge N$,
$$\frac{1}{|F_n|}\sum_{g\in F_n}|h(gx)-h(gy)|\le
\frac{4\ep}{5}+\sum_{i\in K} \sum_{k=1}^{l_i} \| f_i\|_2\cdot |e_{i,k}(x)-e_{i,k}(y)|.$$

Now applying Lusin theorem, we may choose compact $B\subset Y'$ such that
$\mu(B)>1-\ep$ and
all of $e_{i,k}, i\in K, 1\le k\le l_i$ and $h\circ g, g\in \bigcup_{n=1}^{N-1} F_n$ are continuous on $B$, and then there exists $\delta>0$ such that if $x,y\in B$ satisfy
$d(x,y)<\delta$ then $|e_{i,k}(x)-e_{i,k}(y)|<\frac{\ep}{5\sum_{i\in K} \| f_i\|_2\cdot l_i}$
and $|h(gx)-h(gy)|<\ep$
whenever $i\in K, 1\le k\le l_i$ and $g\in \bigcup_{n=1}^{N-1} F_n$.
Thus we have
$$\frac{1}{|F_n|}\sum_{g\in F_n}|h(gx)-h(gy)|< \ep$$
whenever $n\in \N$ and $x,y\in B$ satisfying
$d(x,y)<\delta$.

Finally, since $B$ is compact, there exist $x_1,\cdots,x_s\in B, s\in \N$ such that
$B\subset \underset{j=1}{\overset{s}{\bigcup}}B(x_j,\frac{\delta}{2})$.
Let $X_0=X\setminus B$, $X_j=B(x_j,\frac{\delta}{2})\cap B$ for each $j= 1, \cdots, s$. Thus
$\mu(X_0)< \ep$ and $\overline{H}_{F_n}(x,y)<\ep$ for all $n\in \N$ once both $x$ and $y$ come from the same $X_j$ ($j= 1, \cdots, s$),
which implies that $C(\overline{H}_{F_n},\ep)\le s$ for all $n\in \N$ and hence
$\mu$ has bounded  complexity w.r.t. $\{\overline{H}_{F_n}: n\in \mathbb{N}\}$.
\end{proof}

%@@@@@@@@@@@@@@@@@@@@@@@@@@@@@@@@@

The following theorem is inspired by \cite[Theorem 6 and Theorem 7]{Vershik}.

\begin{thm} \label{0905}
If $\mu$ has bounded  complexity w.r.t. $\{\overline{H}_{F_n}: n\in \mathbb{N}\}$, then $h$ is almost periodic.
\end{thm}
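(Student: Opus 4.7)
The plan is to realize $h$ as an element of the closed subspace $H_c\subset L^2(X,\mu)$ by exhibiting a compact isometric $G$-factor of $(X,G,\mu)$ through which $h$ (approximately) factors, then invoking Peter--Weyl to conclude $h$ is almost periodic. Two preliminary reductions simplify matters: first, since truncation $h\mapsto\max(-N,\min(h,N))$ is $1$-Lipschitz (and thus can only decrease $\overline{H}_{F_n}$), bounded complexity is preserved under truncation, and since $H_c$ is closed in $L^2$, it suffices to treat a bounded $h$, say $|h|\le C$; second, by \cite[Proposition 1.4]{Lindenstrauss} together with the trivial stability of bounded complexity under passage to subsequences, we may assume $\{F_n\}$ is tempered.

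With these reductions in hand, Lemma~\ref{20190324} applied with $\ep=1/m$ yields, for each $m\in\N$, Borel sets $X_1^{(m)},\dots,X_{s_m}^{(m)}$ with $\mu(\bigcup_j X_j^{(m)})>1-1/m$ and $\overline{H}_{F_n}(x,y)<1/m$ uniformly in $n\in\N$ and in $x,y$ lying in a common piece. Applying Lindenstrauss's tempered pointwise ergodic theorem to the diagonal $G$-action on $(X\times X,\mu\times\mu)$ yields an a.e.\ limit $H^*(x,y):=\lim_n\overline{H}_{F_n}(x,y)$, which is a $G$-invariant pseudometric on $X$ (invariance uses the F\o lner property) satisfying $H^*\le 1/m$ a.e.\ on each $X_j^{(m)}\times X_j^{(m)}$. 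Passing to the metric quotient $(\bar Y,d_Y)$ of $(X,H^*)$: total boundedness from the uniform cover makes $\bar Y$ compact; the action descends isometrically; and the pushforward $\nu:=\pi_*\mu$ via $\pi:X\to\bar Y$ is preserved by the compact group $K:=\overline{G}\subset\mathrm{Iso}(\bar Y)$. By Peter--Weyl, $L^2(\bar Y,\nu)\subset H_c$ (as subspaces of $L^2(X,\mu)$ via pullback by $\pi$).

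It remains to show $h\in L^2(\bar Y,\nu)$. Using a partition of $\bar Y$ into Borel pieces $\{C_i^{(m)}\}$ of $d_Y$-diameter $<1/m$, define $h_m(x):=\tfrac{1}{\mu(\pi^{-1}(C_i^{(m)}))}\int_{\pi^{-1}(C_i^{(m)})}h\,d\mu$ for $x\in\pi^{-1}(C_i^{(m)})$; each $h_m$ is $\pi$-measurable, hence in $L^2(\bar Y,\nu)$. Decomposing $\|h-h_m\|_{L^2}^2$ as the sum of within-cluster variances and using $(h(x)-h(y))^2\le 2C H(x,y)$ for bounded $h$, the task reduces to estimating $\int_{A\times A}H\,d\mu\times\mu$ for the $\pi$-saturated sets $A=\pi^{-1}(C_i^{(m)})$. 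The \textbf{main obstacle} is that, while the small $H^*$-diameter of $A$ gives good control of $\int_{A\times A}H^*$, the set $A\times A$ need not be $\mathcal I$-measurable (for $\mathcal I$ the diagonal-$G$-invariant $\sigma$-algebra on $X\times X$), so the bound on $\int H^*$ does not immediately transfer to a bound on $\int H$. The resolution, in the spirit of \cite[Theorems 6 and 7]{Vershik}, is to exploit the uniform-in-$n$ bound $\int_{X_j^{(m)}\times X_j^{(m)}}\overline{H}_{F_n}\,d\mu\times\mu\le(1/m)\mu(X_j^{(m)})^2$: after the change of variables $x\mapsto gx$, $y\mapsto gy$, the left-hand side becomes a F\o lner average of $\int H$ over the orbit-translates of $X_j^{(m)}\times X_j^{(m)}$, which by the tempered ergodic theorem and an ergodic-decomposition argument transfers control from the Folner averages $\overline{H}_{F_n}$ back to $H$ itself on $\mathcal I$-saturations of the pieces. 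Once this yields the desired bound on within-cluster variances, $h_m\to h$ in $L^2$, so $h\in L^2(\bar Y,\nu)\subset H_c$, completing the proof.
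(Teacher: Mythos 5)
Your strategy is genuinely different from the paper's: you try to build the isometric (Kronecker-type) factor directly from the limit pseudometric $H^*=\lim_n\overline{H}_{F_n}$ and then realize $h$ inside $L^2$ of that factor, whereas the paper argues by contradiction. There, Lemma~\ref{20190319} produces an infinite $c$-separated family $\{L_{g_i}\}$ in $L^1(X\times X,\mu\times\mu)$; the functions $L_g$ are approximated, on average over $g\in F_n$, by step functions lying in the $k^2$-dimensional space spanned by $\{1_{X_i\times X_j}\}$ built from the clusters of Lemma~\ref{20190324}; a pigeonhole argument yields a subset $F_n'\subset F_n$ of density $\ge 1/2C$ on which the $L_u$ are pairwise $c$-close; and the F\o lner property translates a separated pair $L_{g_p},L_{g_q}$ onto such a close pair $L_u,L_v$ --- a contradiction obtained without ever constructing a factor. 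Your route is in the spirit of \cite{Vershik}, and could in principle be carried out, but as written its crucial step fails.

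The gap is exactly the ``main obstacle'' you yourself name, and your proposed resolution does not close it. Writing $\mathcal I$ for the diagonally invariant $\sigma$-algebra on $X\times X$, the tempered ergodic theorem gives $H^*=\mathbb{E}[H\mid\mathcal I]$ a.e., and the cluster structure gives $H^*\le\ep$ a.e.\ on $A\times A$ for each saturated piece $A$. What the within-cluster variance bound requires is $\int_{A\times A}H\,d\mu\times\mu\le\ep\,\mu(A)^2$, i.e.\ control of $H$ itself. Your suggested fix --- change variables so that $\int_{X_j\times X_j}\overline{H}_{F_n}\,d\mu\times\mu$ becomes a F\o lner average of $\int H\cdot 1_{g(X_j\times X_j)}$, then pass to the limit --- only yields $\int H\cdot\mathbb{E}[1_{A\times A}\mid\mathcal I]\,d\mu\times\mu=\int H^*\cdot\mathbb{E}[1_{A\times A}\mid\mathcal I]\,d\mu\times\mu=\int_{A\times A}H^*\,d\mu\times\mu$, which is precisely the information you started from; since $1_{A\times A}$ is not $\mathcal I$-measurable, nothing transfers to $\int_{A\times A}H$. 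Some bridge here is unavoidable and is the entire content of the theorem: \emph{any} measurable $h$ admits, for every $\ep>0$, a finite partition (the level sets of $h$) on whose squares $H<\ep$, so smallness of $H$ on squares of some fine partition is vacuous --- the point is to achieve it on a $\pi$-saturated partition, and this is where the paper's pigeonhole-and-translation argument does the real work. There are also secondary, fixable issues: $H^*$ is defined only $\mu\times\mu$-a.e., so the metric quotient, the isometric $G$-action on it, and the pushforward $\nu$ need a measure-algebra or Borel-selection argument; and the uniform covers make $\nu$ tight (inner regular by totally bounded sets) but do not make $\bar Y$ compact, so the compact group $K=\overline{G}$ and Peter--Weyl are not available as stated (an Arzel\`a--Ascoli argument for uniformly continuous functions under an isometric action on a tight metric measure space would have to substitute).
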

\begin{proof}
Remark that $\lim_{r\ra + \infty} \|h^r-h\|_2= 0$, where for each $r>0$ we construct  in $L^2(X,\mu)$
$$h^r(x)=\begin{cases}
       h(x),& |h(x)|\leq r\\
       \ \\
       r\cdot \frac{h(x)}{|h(x)|},& |h(x)|> r
       \end{cases}\ \ \ \text{for each}\ x\in X.
$$
Assume the contrary that $h$ is not almost periodic, by \cite[Theorem 7.1]{Zimmer}
there exists $R>0$ such that $h^R$ is not almost periodic.
We set, for all $x, y\in X$, $L(x,y)=h(x)-h(y)$ and
$L^R(x,y)=h^R(x)-h^R(y)$, $H^R(x,y)= |L^R(x,y)|$, which implies
\begin{equation} \label{20190718}
|h(x)-h(y)|\geq |h^R(x)-h^R(y)|\ \ \text{and hence}\ \ \overline{H}_{F_n}(x,y)\ge \overline{H^R}_{F_n}(x,y).
\end{equation}
Since  $\mu$ has bounded  complexity w.r.t. $\{\overline{H}_{F_n}: n\in \N\}$,
it is easy to obtain from \eqref{20190718} that $\mu$ has bounded  complexity w.r.t. $\{\overline{H^R}_{F_n}: n\in \N\}$.
Now let $h'(x)=\frac{h^R(x)}{4R}$ for all $x\in X$. Then $h'\in L^2(X,\mu)$ and $|h'|\leq \frac{1}{4}$. It is easy to see that $h'\in L^2(X, \mu)$ is not almost periodic, and so we may write alternatively
$h$ instead of $h'$ in the following.

Observe from Lemma \ref{20190319} that $L$ is not almost periodic in $L^2(X\times X,\mu\times \mu)$, and so there exists $c_1> 0$ and $g_m\in G, m\in \N$
such that $\|L_{g_i}-L_{g_j}\|_2\geq c_1$
for all $i\not=j \in \N$ (where $L (x, y)= h (x)- h (y)$ and $L_g (x, y)= L (g x, g y)$ for all $x, y\in X$ and each $g\in G$).
Note that
$|L|\leq \frac{1}{2}$, and then
$|L_{g_i}-L_{g_j}|\le 1$, hence, for any $i\not=j \in \N$,
\begin{equation} \label{night2}
\int_{X \times X} |L_{g_i}-L_{g_j}| d\mu\times \mu \geq \int_{X \times X} |L_{g_i}-L_{g_j}|^2 d\mu\times \mu \ge (c_1)^2\ (\text{denoted by $c$}).
\end{equation}

Without loss of generality, we may assume that the F${\o}$lner sequence $\{F_n: n\in \N\}$ is tempered.
Let $\ep=\frac{c}{24}$. Then by Lemma \ref{20190324}, there exist nonempty Borel sets $X_1, \cdots, X_k$
  with $\mu(\bigcup_{i=1}^k X_i)>1-\ep$ such that
   $\overline{H}_{F_n}(x,y)<\ep$ for all $x,y\in X_i$ (for some $i= 1, \cdots, k$) and any $n\in \N$.
We choose $p_i\in X_i$ arbitrarily for each $i = 1, \cdots, k$.
For any $g\in G$ and all $x, y\in X$, we denote $L_g (x,y)=L(gx,gy), H_g(x,y)=H(gx,gy)$ and define $f_g(x,y)$ as
$$
f_g(x,y)=\begin{cases}
       0,& x\in X_0\, \text{ or }\, y\in X_0,\ \text{where}\ X_0= X\setminus \bigcup_{i= 1}^k X_i\\
       \ \\
       L_g(p_i,p_j),& x\in X_i\ \text{and}\ y\in X_j\quad (i, j= 1, \cdots, k)
       \end{cases}.
$$

Firstly we will estimate the sum of the $L^1$-distances between the pairs of functions $f_g$ and $L_g$ over $g\in F_n$ as follows.
If we assume $(x,y)\in X_i\times X_j$, since
\begin{equation*}
|L_g(x, y)- f_g(x, y)| = |L_g(x, y)- L_g(p_i, p_j)| \leq H_g(x, p_i) + H_g(y, p_j),
\end{equation*}
 summing these inequalities over $g\in F_n$ we obtain in the right-hand side that
\begin{equation} \label{201907182301}
\underset{ g\in F_n}{\sum} H_g(x, p_i) = |F_n|\cdot \overline{H}_{F_n}(x, p_i)<|F_n|\ep, \
\underset{ g\in F_n}{\sum} H_g(y, p_j) = |F_n|\cdot \overline{H}_{F_n}(y, p_j)<|F_n|\ep.
\end{equation}
 Let $X_{00}=(X_0\times X)\cup (X \times X_0)$. Then $\mu(X_{00})<2\ep$, and hence by the construction
\begin{eqnarray*}
& &\sum_{g\in F_n} \int_{X\times X}|L_g-f_g|d\mu\times\mu \\
&= & \overset{k}{\underset{ i,j=1}{\sum}}\sum_{g\in F_n}\int_{X_i\times X_j}|L_g-f_g|d\mu\times\mu
+\sum_{g\in F_n}\int_{X_{00}}|L_g-f_g|d\mu\times\mu \\
&\le & \overset{k}{\underset{ i,j=1}{\sum}}\sum_{g\in F_n}\int_{X_i\times X_j} [H_g(x, p_i) + H_g(y, p_j)]d\mu\times\mu +\sum_{g\in F_n}\int_{X_{00}}H_gd\mu\times\mu \\
&\le & \overset{k}{\underset{ i,j=1}{\sum}}\int_{X_i\times X_j} (|F_n| \ep+ |F_n| \ep)d\mu\times\mu + \sum_{g\in F_n} \int_{X_{00}}H_gd\mu\times\mu\ (\text{using \eqref{201907182301}}) \\
& <& 4|F_n|\ep\ (\text{observe that $H_g\le 1$ for all $g\in G$}).
\end{eqnarray*}
Since $4\ep =\frac{c}{6}$, there exists $F_n''\subset F_n$ with $|F_n''|\geq \frac{|F_n|}{2}$
such that, for all $g\in F_n''$,
\begin{equation} \label{night}
\int_{X\times X} |L_g-f_g| d \mu\times \mu < \frac{c}{3}.
\end{equation}

Now we consider $A=\big\{w\in L^1(X\times X, \mu\times \mu): w=\overset{k}{\underset{ i,j=1}{\sum}}c_{ij}1_{X_i\times X_j}\ \text{with all of}\ |c_{ij}|\leq \frac{1}{2}\big\}$. Clearly $\{f_g: g\in G\}\subset A$. Since $A$ has dimension $k^2$, there exists $C\in \N$ and
 $F_n'\subset F_n''$ with
 $|F_n'| \geq\frac{|F_n''|}{C}  \geq\frac{|F_n|}{2C}$ such that, for all $u,v\in F_n'$, we have
$\int_{X\times X} |f_{u}-f_{v}| d \mu\times \mu\leq \frac{c}{3}$,
and hence
\[\int_{X\times X} |L_{u}-L_{v}| d \mu\times \mu< c\ (\text{by \eqref{night}}).\]
Set $M=5C$. Since $\{F_n: n\in \N\}$ is a F${\o}$lner sequence of $G$, once $n\in \N$ is large enough, for all $m=1,\cdots,M$ one has
$\frac{|g_m^{-1}F_n\Delta F_n|}{|F_n|}<\frac{1}{M}$, and then
$$\frac{| g_m^{-1}F_n'\setminus F_n|}{|F_n|}\le \frac{|g_m^{-1}F_n\setminus F_n|}{|F_n|}<\frac{1}{M}\ \text{(recalling $F_n'\subset F_n''\subset F_n$)},$$
hence
$$\frac{| g_m^{-1}F_n'\cap F_n|}{|F_n|}= \frac{|F_n'|}{|F_n|}-\frac{| g_m^{-1}F_n'\setminus F_n|}{|F_n|}> \frac{1}{2C}-\frac{1}{M}=\frac{3}{10C}.$$
Thus, once $n\in \N$ is large enough,
one has
 $$\overset{M}{\underset{m=1}{\sum}}|g_m^{-1}F_n'\cap F_n|>\frac{3|F_n|}{10C}\times M=\frac{3}{2}|F_n|,$$
and so there exist $1 \leq p\not = q\leq M$ such that $g_p^{-1}F_n'\cap g_q^{-1}F_n'\cap F_n\not=\emptyset$.

We choose $w \in g_p^{-1}F_n'\cap g_q^{-1}F_n'\cap F_n$, and say $u,v\in F_n'$ such that
$g_pw=u$ and $g_qw=v$.
 Then by above construction of \eqref{night2} and $F_n'$, we have $\int_{X\times X} |L_{g_p}-L_{g_q}| d \mu\times \mu\geq c$ and $\int_{X\times X} |L_{u}-L_{v}| d \mu\times \mu <c$, while
\[\int_{X\times X} |L_{g_p}-L_{g_q}| d \mu\times \mu= \int_{X\times X} |L_{g_p w}-L_{g_q w}| d \mu\times \mu
=\int_{X\times X} |L_{u}-L_{v}| d \mu\times \mu,\]
which arrives at a contradiction. Thus the function $h$ is almost periodic.
\end{proof}

Now it is ready to prove Theorem \ref{201904291}.

\begin{proof}[Proof of Theorem \ref{201904291}]
By Theorem \ref{0905} it suffices to prove that if $h$ is almost periodic then $\mu$ has bounded  complexity w.r.t. $\{\overline{H}_{F_n}: n\in \mathbb{N}\}$. Assume the contrary that
there exists $\ep>0$ such that the sequence $\{C(\overline{H}_{F_n},\ep): n\in \N\}$ is not bounded,
and so there exists a tempered subsequence $\{F_{n_k}: k\in \N\}$ such that the sequence $\{C(\overline{H}_{F_{n_k}},\ep): k\in \N\}$
is not bounded yet. While since $\{F_{n_k}: k\in \N\}$ is a tempered F${\o}$lner sequence, by Theorem \ref{20190320} one has that the sequence
$\{(\overline{H}_{F_{n_k}},\ep): k\in \N\}$ is bounded, a contradiction.
\end{proof}

\section{Proof of Theorem \ref{main}}

In this section, we present the proof of Theorem \ref{main}.

\medskip

The equivalence of $\eqref{mfirst}\Leftrightarrow \eqref{msecond}$ in Theorem \ref{main} follows from the following result, which generalizes \cite[Proposition 4.1]{HWY},
 \cite[Theorem 4.6]{HLTXY} and corresponding results in \cite{Vershik}
 from $\Z$-actions to amenable group actions.
Recall that we have assumed $X$ to be a compact metric space with the metric $d$. By
 a {\it semimetric} of $X$ we mean
$\rho: X\times X \ra \mathbb{R}_+$ satisfying that
$\rho(x,x)=0$,
$\rho(x,y)=\rho(y,x)$ and $\rho(x,z)\le \rho(x,y)+\rho(y,z)$ for all $x, y, z\in X$.
We say that the semimetric $\rho$ is \emph{continuous}, if the identity map
$Id: (X,d)\ra (X,\rho)$ is continuous.
Observe that if $\rho$ is a continuous metric on $X$,
then $Id: (X,d)\ra (X,\rho)$ will be a homeomorphism, that is,
the topology induced by $\rho$ coincides with that induced by $d$.

\begin{thm}\label{20190427}
The following statements are equivalent:
\begin{enumerate}
\item  $\mu$ has discrete spectrum. \label{0905-01}

\item For  some continuous metric $d'$, $\mu$ has bounded  complexity w.r.t. $\{\overline{d'}_{F_n}: n\in \N\}$. \label{0905-02}

    \item \label{0905-03} For any continuous semimetric $\rho$,
$\mu$ has bounded  complexity w.r.t. $\{\overline{\rho}_{F_n}: n\in \N\}$.

\item For the metric $d$, $\mu$ has bounded complexity w.r.t. $\{\overline{d}_{F_n}: n\in \N\}$. \label{0905-04}
\end{enumerate}
\end{thm}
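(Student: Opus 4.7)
The plan is to close the cycle $(1) \Rightarrow (3) \Rightarrow (4) \Rightarrow (2) \Rightarrow (1)$. The two middle links are immediate, since $d$ itself is a continuous metric (hence in particular a continuous semimetric): taking $\rho = d$ in (3) yields (4), and taking $d' = d$ in (4) yields (2). The real content lies in $(2) \Rightarrow (1)$ and $(1) \Rightarrow (3)$, and in both directions the key vehicle is Theorem~\ref{201904291}, which translates almost periodicity of a single $h \in L^2(X,\mu)$ into bounded complexity of $\mu$ w.r.t.\ $\{\overline{H^h}_{F_n}\}$, where $H^h(x,y) = |h(x)-h(y)|$.

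For $(2) \Rightarrow (1)$, my approach is to show that every $f \in C(X)$ is almost periodic; since $C(X)$ is dense in $L^2(X,\mu)$ and the space $H_c$ of almost periodic functions is closed, this forces $H_c = L^2(X,\mu)$. Fix $f \in C(X)$. Because $X$ is compact and $Id:(X,d)\to(X,d')$ is continuous, it is a homeomorphism, so $f$ is uniformly continuous w.r.t.\ $d'$: given $\varepsilon>0$, choose $\delta>0$ with $d'(u,v) < \delta$ implying $|f(u)-f(v)| < \varepsilon$. Splitting the sum defining $\overline{H^f}_{F_n}(x,y)$ according to whether $d'(gx,gy) < \delta$ and applying a Markov-type estimate on the complement yields
\[
\overline{H^f}_{F_n}(x,y) \;\leq\; \varepsilon + \frac{2\|f\|_\infty}{\delta}\,\overline{d'}_{F_n}(x,y).
\]
This estimate lets me turn any cover witnessing $C(\overline{d'}_{F_n},\eta)\leq M$ (for $\eta$ chosen small relative to $\varepsilon$ and $\|f\|_\infty$) into a cover witnessing bounded complexity of $\{\overline{H^f}_{F_n}\}$; Theorem~\ref{201904291} then gives that $f$ is almost periodic.

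For $(1) \Rightarrow (3)$, fix any continuous semimetric $\rho$. I would first construct a continuous metric $D$ on $X$ equivalent to $d$ from countably many scalar observables: pick a sequence $\{f_k\}_{k\geq 1}$ in $C(X)$ with $\|f_k\|_\infty\leq 1$ dense in $C(X)$ (in particular separating points), and set $D(x,y) = \sum_{k\geq 1} 2^{-k}|f_k(x)-f_k(y)|$. By (1), each $f_k$ is almost periodic, so Theorem~\ref{201904291} yields bounded complexity for every $\{\overline{H^{f_k}}_{F_n}\}$. Given $\varepsilon>0$, I would truncate the series at some $K$ so that the tail contributes at most $\varepsilon/4$ uniformly in $x,y$, choose a common threshold $\eta>0$ small enough so that points in a $\overline{H^{f_k}}_{F_n}$-ball of radius $\eta$ differ in $\overline{D}_{F_n}$ by at most $\varepsilon$, and pass to the (finite) common refinement of the covers for $k\leq K$: this uses at most $\prod_{k\leq K} M_k$ sets and retains the required $\mu$-mass by a union bound. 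Hence $\mu$ has bounded complexity w.r.t.\ $\{\overline{D}_{F_n}\}$. Since $D$ induces the same topology as $d$ on the compact space $X\times X$, the semimetric $\rho$ is uniformly continuous w.r.t.\ $D$, and the same Markov-type estimate as in $(2) \Rightarrow (1)$ transfers bounded complexity from $\{\overline{D}_{F_n}\}$ to $\{\overline{\rho}_{F_n}\}$. The main technical nuisance, rather than a genuine obstacle, will be the careful bookkeeping required to synchronize the two roles of $\varepsilon$ in $C(\cdot,\varepsilon)$ (the covering radius and the measure threshold) through the truncation and common-refinement steps so that the final bound $M=M(\varepsilon)$ is genuinely independent of $n$.
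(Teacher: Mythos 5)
Your proposal is correct and follows essentially the same route as the paper: the same auxiliary metric $d'(x,y)=\sum_k 2^{-k}|f_k(x)-f_k(y)|$ built from a dense sequence in $C(X)$, the same Markov-type estimate to transfer bounded complexity between a metric and a semimetric it uniformly dominates, and the same reduction of discrete spectrum to almost periodicity of continuous functions via Theorem~\ref{201904291}. The only organizational difference is in the direction $(1)\Rightarrow(3)$: the paper invokes Theorem~\ref{20190320} to get, for each $f_k$, a single large compact set on which the averaged oscillation is uniformly small (first for tempered F\o lner sequences, then in general by the subsequence trick), and intersects finitely many such sets; you instead apply Theorem~\ref{201904291} to each $f_k$ separately and take the common refinement of the resulting covers, paying a product $\prod_{k\le K}M_k$ in cardinality and a union bound $K\eta$ in measure --- both harmless, and this variant has the mild advantage of not needing the temperedness reduction at that step.
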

\begin{proof}
It is trivial
$\eqref{0905-03}\Rightarrow \eqref{0905-04}\Rightarrow \eqref{0905-02}$, and so it remains to prove
$\eqref{0905-01}\Longleftrightarrow \eqref{0905-02}\Longleftrightarrow \eqref{0905-03}$.

$\eqref{0905-01}\Rightarrow \eqref{0905-02}$
Choose a countable set $\{f_k: k\in \N\}\subset C(X)$
such that $\underset{x\in X}{\max} |f_k(x)| \leq 1$ for each $k\in \N$ and the linear span of $\{f_k: k\in \N\}$ is dense in $C(X)$.
Then $d'$ is a compatible metric on $X$, equivalently, $d'$ is a continuous metric on $X$, where
$$d'(x,y)=\sum_{k=1}^\infty\frac{|f_k(x)-f_k(y)|}{2^k}\ \text{for all}\ x, y\in X.$$
In the following we will show that $\mu$ has bounded  complexity w.r.t. $\{\overline{d'}_{F_n}: n\in \N\}$, that is, for each $\ep> 0$ the sequence $\{C(\overline{d'}_{F_n},\ep): n\in \N\}$ is bounded.

Fix $\ep> 0$. Obviously we may select $N\in \N$ such that
$$d'(x,y)\le \sum_{k=1}^N\frac{|f_k(x)-f_k(y)|}{2^k}+ \frac{\ep}{2}\ \text{for all $x, y\in X$}.$$

We assume firstly that the F${\o}$lner sequence $\{F_n: n\in \N\}$ is tempered.
Since $\mu$ has discrete spectrum, each $f_k$ is almost periodic, and so by Theorem \ref{20190320} there exists $\delta_k> 0$ and compact
$A_k\subset X$ with $\mu(A_k)>1-\frac{\ep}{2^k}$ such that
$\frac{1}{|F_n|}\underset{ g\in F_n}{\sum}|f_k(gx)-f_k(gy)|<\frac{\ep}{2}$
 for each
$n\in \N$ and all $x,y\in A_k$ with $d(x,y)<\delta_k$.
Set $A=\underset{k=1}{\overset{N}{\bigcap}} A_k$ and $\delta=\underset{1\le k\le N}{\min} \delta_k> 0$.
Then $A\subset X$ is compact, $\mu (A)> 1- \ep$ and, once $x,y\in A$ satisfy $d(x,y)<\delta$, one has
\begin{equation*}
\overline{d'}_{F_n}(x,y)\le \sum_{k=1}^N \frac{1}{|F_n|}\underset{g\in F_n}{\sum} \frac{|f_k(gx)-f_k(gy)|}{2^k} +\frac{\ep}{2}<\ep.
\end{equation*}
From this it is easy to obtain that the sequence $\{C(\overline{d'}_{F_n},\ep): n\in \N\}$ is bounded.

Now we consider a general F${\o}$lner sequence $\{F_n: n\in \N\}$. Assume the contrary that the sequence $\{C(\overline{d'}_{F_n},\ep): n\in \N\}$ is not bounded,
and then by arguments similar to the proof of Theorem \ref{201904291}
we will arrive at a contradiction.

$\eqref{0905-02}\Rightarrow \eqref{0905-03}$
Let $\rho$ be a continuous semimetric on $X$. Since $(X, d)$ is a compact metric space, and the identity map
$Id: (X,d)\ra (X,\rho)$ is continuous, it is easy to see that the function $\rho$ is bounded, and so we may assume without loss of generality $\rho\leq 1$.

Now assume that
$d'$ is a continuous metric such that $\mu$ has bounded  complexity w.r.t. $\{\overline{d'}_{F_n}: n\in \N\}$. Let $\ep> 0$ and fix it.
Observe that there exists $\delta>0$ such that, for all $x, y\in X$, $d'(x,y)<\delta$ implies $\rho(x,y)<\ep$.
By the assumption of bounded complexity, there exists $k\in \N$ such that, for each $n\in \N$,
$X$ can be represented as the union of Borel sets $X_0^n, X_1^n,\cdots,X_k^n$, furthermore,
$\mu(X_0^n) <\ep\delta$ and $\overline{d'}_{F_n} (x,y)<\ep\delta$ whenever $x,y\in X_j^n$ for some $j = 1,\cdots,k$.
In the following we will prove that $\overline{\rho}_{F_n}(x,y)< 2\ep$ whenever $x,y\in X_j^n$ for some $n\in \N$ and $j = 1,\cdots,k$, which implies $C(\overline{\rho}_{F_n}, 2 \ep)\le k$ for all $n\in \N$ and then
$\mu$ has bounded  complexity w.r.t. $\{\overline{\rho}_{F_n}: n\in \N\}$ by the arbitrariness of $\ep> 0$.

Now fix $x,y\in X_j^n$, where $n\in \N$ and $j = 1,\cdots,k$. Since $\overline{d'}_{F_n} (x,y)<\ep\delta$, one has
\[|E_n|\geq  |F_n|(1-\ep),\ \text{where}\ E_n=\{g\in F_n: d'(gx,gy)< \delta\}.\]
By the construction of $\delta$, we have $\rho(gx,gy)<\ep$ for all $g\in E_n$, and then
$$\overline{\rho}_{F_n}(x,y)\leq \frac{1}{|F_n|}\cdot (|E_n|\ep+|F_n\setminus E_n|)\ (\text{recall $\rho\le 1$})\ <2\ep.$$

$\eqref{0905-03}\Rightarrow \eqref{0905-01}$ Let $h$ be arbitrary continuous function on $X$, and set $H(x,y)=|h(x)-h(y)|$ for all $x, y\in X$. Then $H$ is a continuous semimetric of $X$, and so by the assumption $\mu$ has bounded  complexity w.r.t. $\{\overline{H}_{F_n}: n\in \N\}$, hence $h$ is almost periodic in $L^2(X,\mu)$ by Theorem \ref{201904291}.
As $C(X)$ is dense in $L^2(X,\mu)$, one has that each function in $L^2(X,\mu)$ is almost periodic, thus $\mu$ has discrete spectrum.
\end{proof}

The equivalence of $\eqref{mfirst}\Leftrightarrow \eqref{mthird}\Leftrightarrow \eqref{mfourth}$ in Theorem \ref{main} is shown by the following result, which reveals the relationship between discrete spectrum and complexity functions
induced by a sequence of semimetrics
generated by
the Hamming distance of names of partitions (and of partitions of two elements). This generalizes \cite[Proposition 3]{Fer} and \cite[Corollary 3.2]{Yu2018}
 from $\Z$-actions to amenable group actions.

\begin{thm}\label{20190428}
The following statements are equivalent:
\begin{enumerate}
\item \label{0719-1}  $\mu$ has discrete spectrum.

\item \label{0719-2} For any finite partition $\alpha=\{A_1,\cdots,A_l\}$ of $X$,  $\mu$ has bounded  complexity w.r.t. $\{\overline{H^\alpha}_{F_n}: n\in \N\}$, where $H^\alpha(x,y)=\frac{1}{2}\sum_{i=1}^l |1_{A_i}(x)-1_{A_i}(y)|$.

\item \label{0719-3} For any partition $\alpha=\{A, X\setminus A\}$ of $X$,  $\mu$ has bounded  complexity w.r.t. $\{\overline{H^\alpha}_{F_n}: n\in \N\}$, where $H^\alpha(x,y)= |1_{A}(x)-1_{A}(y)|$.
\end{enumerate}
\end{thm}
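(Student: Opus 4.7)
The plan is to reduce everything to Theorem~\ref{201904291} applied to indicator functions $h=1_A$, together with the density of Borel indicator functions in $L^2(X,\mu)$. The implication $\eqref{0719-2}\Rightarrow\eqref{0719-3}$ is immediate since condition~\eqref{0719-3} is just the case $l=2$ of condition~\eqref{0719-2}, so only two implications require content.

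For $\eqref{0719-3}\Rightarrow\eqref{0719-1}$, fix any Borel $A\subset X$ and apply \eqref{0719-3} to $\alpha=\{A,X\setminus A\}$; as observed in the preamble, the associated Hamming semimetric is $H^\alpha(x,y)=|1_A(x)-1_A(y)|$. Bounded complexity w.r.t. $\{\overline{H^\alpha}_{F_n}: n\in\N\}$ together with Theorem~\ref{201904291} applied to $h=1_A$ then shows that $1_A$ is almost periodic, i.e., $1_A\in H_c$. Since $\{1_A: A\in\mathcal{B}_X\}$ has dense linear span in $L^2(X,\mu)$ and $H_c$ is a closed subspace, this forces $H_c=L^2(X,\mu)$, which is discrete spectrum.

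For $\eqref{0719-1}\Rightarrow\eqref{0719-2}$, fix $\alpha=\{A_1,\ldots,A_l\}$ and set $H_i(x,y)=|1_{A_i}(x)-1_{A_i}(y)|$, so that $\overline{H^\alpha}_{F_n}=\tfrac{1}{2}\sum_{i=1}^l\overline{H_i}_{F_n}$. Discrete spectrum makes each $1_{A_i}$ almost periodic, whence Theorem~\ref{201904291} gives bounded complexity w.r.t. every $\{\overline{H_i}_{F_n}: n\in\N\}$ separately. To combine them: given $\ep>0$, choose $M_i$ with $C(\overline{H_i}_{F_n},\ep/(2l))\le M_i$ for all $n$, and for each $n$ select cover points $\{a_j^i\}_{j=1}^{M_i}$ with $\mu\bigl(\bigcup_j B_{\overline{H_i}_{F_n}}(a_j^i,\ep/(4l))\bigr)>1-\ep/(2l)$. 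Intersecting these unions across $i=1,\ldots,l$ and distributing yields at most $\prod_i M_i$ pieces of the form $\bigcap_i B_{\overline{H_i}_{F_n}}(a_{j_i}^i,\ep/(4l))$ whose union has $\mu$-measure greater than $1-\ep/2$; by the triangle inequality each such piece has $\overline{H^\alpha}_{F_n}$-diameter at most $\ep/4$ and therefore sits inside a single $\overline{H^\alpha}_{F_n}$-ball of radius $\ep/2$. Hence $C(\overline{H^\alpha}_{F_n},\ep)\le\prod_i M_i$ uniformly in $n$.

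The only step with any substance is the product-of-covers combination in the last paragraph, and even there the content is purely bookkeeping with constants, chosen so that the total measure loss ($l\cdot\ep/(2l)=\ep/2$) and the final diameter ($\tfrac12\cdot l\cdot\ep/(2l)=\ep/4$) simultaneously stay within the tolerance $\ep$. Beyond Theorem~\ref{201904291} and the density of Borel indicators in $L^2(X,\mu)$, no new dynamical input is required.
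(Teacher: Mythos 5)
Your proposal is correct, and the implications $\eqref{0719-2}\Rightarrow\eqref{0719-3}$ and $\eqref{0719-3}\Rightarrow\eqref{0719-1}$ coincide with the paper's argument (the paper phrases the last step as ``finite linear sums of almost periodic functions are almost periodic'' plus density, which is the same as your appeal to $H_c$ being a closed subspace). The genuine difference is in $\eqref{0719-1}\Rightarrow\eqref{0719-2}$. The paper first reduces to a tempered F{\o}lner sequence (by the subsequence trick from the proof of Theorem~\ref{201904291}) and then invokes Theorem~\ref{20190320}, which for each $1_{A_i}$ produces a compact set $C_i$ with $\mu(C_i)>1-\ep/l$ and a modulus $\delta_i$ such that $d(x,y)<\delta_i$ forces $\overline{H^i}_{F_n}(x,y)<\ep/l$ for all $n$; intersecting the $C_i$ and taking $\delta=\min_i\delta_i$ gives a single compact set on which all $l$ Hamming means are simultaneously controlled by a common $d$-ball cover. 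You instead use only the weaker conclusion of Theorem~\ref{201904291} (bounded complexity for each $\overline{H^i}_{F_n}$, with cover points allowed to depend on $n$) and combine the $l$ covers by intersecting balls, getting the uniform bound $\prod_i M_i$; your constants check out ($l\cdot\ep/(2l)=\ep/2$ measure loss and $\overline{H^\alpha}_{F_n}$-diameter $\tfrac12\cdot l\cdot\ep/(2l)=\ep/4<\ep/2$). Your route is more modular and avoids re-running the temperedness reduction, since Theorem~\ref{201904291} is already stated for arbitrary F{\o}lner sequences; what it gives up is the stronger ``common compact set and common $\delta$'' conclusion, which the paper reuses almost verbatim in the later equivalence with equicontinuity in the mean, so the paper's heavier route is not wasted there. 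Both arguments are valid proofs of the stated theorem.
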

\begin{proof}
$\eqref{0719-1}\Rightarrow \eqref{0719-2}$
Again by arguments similar to the proof of Theorem \ref{201904291}, we may assume without loss of generality that the F${\o}$lner sequence $\{F_n: n\in \N\}$ is tempered.
 We fix $\ep> 0$. We also fix each $i= 1, \cdots, l$, then set $H^i(x,y)=|1_{A_i}(x)-1_{A_i}(y)|$ for all $x, y\in X$.
 Since $1_{A_i}$ is almost periodic,
 by Theorem \ref{20190320} there exists $\delta_i>0$ and compact $C_i\subset X$
  with $\mu(C_i)>1-\frac{\ep}{l}$ such that
   $\overline{H^i}_{F_n}(x,y)<\frac{\ep}{l}$ for each $n\in \N$ and all $x,y\in C_i$ with $d(x,y)<\delta_i$.
 Let $\delta=\underset{1\le i\le l}{\min} \delta_i> 0$ and $C=\underset{i=1}{\overset{l}{\bigcap}} C_i\subset X$ which is a compact set. Then $\mu(C)>1-\ep$ and, for each $n\in \N$ and all $x,y\in C$ with $d(x,y)<\delta$,
 $$\overline{H^\alpha}_{F_n}(x,y)=\frac{1}{2}\sum_{i=1}^l \overline{H^i}_{F_n}(x,y)<\frac{1}{2}\cdot l\cdot \frac{\ep}{l}= \frac{\ep}{2}.$$
 From this it is easy to obtain that the sequence $\{C(\overline{H^\alpha}_{F_n},\ep): n\in \N\}$ is bounded.

$\eqref{0719-2}\Rightarrow \eqref{0719-3}$ is obvious. It remains to prove $\eqref{0719-3}\Rightarrow \eqref{0719-1}$.

For each $A\in \mathcal{B}_X$, by the assumption that $\mu$ has bounded  complexity w.r.t. $\{\overline{H^\alpha}_{F_n}: n\in \N\}$ for $\alpha= \{A, X\setminus A\}$. Observe that in this case $H^\alpha(x,y)=|1_{A}(x)-1_{A}(y)|$
for all $x,y\in X$, and so by Theorem \ref{201904291} the function $1_A$ is almost periodic. It is trivial to see that the linear sum of finitely many almost periodic functions in $L^2 (X, \mu)$ is still almost periodic. Since the linear span of $\{1_A: A\in \mathcal{B}_X\}$ is dense in $L^2 (X, \mu)$, we have that each function in $L^2 (X, \mu)$ is almost periodic, and so $\mu$ has discrete spectrum.
\end{proof}

The equivalence of $\eqref{mfirst}\Leftrightarrow (5)\Leftrightarrow (6)$ in Theorem \ref{main}, under the condition that the F${\o}$lner sequence $\{F_n: n\in \mathbb{N}\}$ is tempered, follows from the following result, which extends \cite[Theorem 3.1]{Yu2018} and \cite[Theorem 4.2]{HLTXY}.

\begin{thm}
Assume that the F${\o}$lner sequence $\{F_n: n\in \mathbb{N}\}$ is tempered. Then the following statements are equivalent:
\begin{enumerate}

\item \label{07191} $\mu$ has discrete spectrum.

\item \label{07192} For any finite partition $\alpha=\{A_1,\cdots,A_l\}$ of $X$ and each $\varepsilon>0$, there exist finitely many Borel subsets $B_1,\cdots,B_k$
such that $\mu(\bigcup_{i=1}^k B_i)>1-\varepsilon$ and $\overline{H^\alpha}_{F_n}(x,y)<\varepsilon$ for all $x,y\in B_i$ (for some $i= 1, \cdots, k$) and any $n\in \N$.

\item \label{07193} $\mu$ is equicontinuous in the mean w.r.t. $\{F_n: n\in \N\}$.

\item \label{07194} $\mu$ is mean equicontinuous w.r.t. $\{F_n: n\in \N\}$.
\end{enumerate}
\end{thm}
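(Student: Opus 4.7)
The plan is to prove the four equivalences via the chain $\eqref{07191}\Leftrightarrow\eqref{07192}$ together with the cycle $\eqref{07191}\Rightarrow\eqref{07193}\Rightarrow\eqref{07194}\Rightarrow\eqref{07191}$. For $\eqref{07191}\Rightarrow\eqref{07192}$, Theorem \ref{20190428} reduces discrete spectrum to bounded complexity w.r.t. $\{\overline{H^\alpha}_{F_n}\}$ for every partition $\alpha=\{A_1,\dots,A_l\}$. Since the F{\o}lner sequence is tempered, setting $H^i(x,y)=|1_{A_i}(x)-1_{A_i}(y)|$ and applying Lemma \ref{20190324} with $h=1_{A_i}$ and parameter $\varepsilon/(2l)$ yields, for each $i$, finitely many Borel subsets $\{X^i_j\}_j$ such that $\mu(\bigcup_j X^i_j)>1-\varepsilon/(2l)$ and $\overline{H^i}_{F_n}(x,y)<\varepsilon/(2l)$ for $x,y$ in a common $X^i_j$ and all $n\in\N$. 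The common refinement $\{\bigcap_{i=1}^l X^i_{j_i}\}$ is then a Borel family of total $\mu$-measure exceeding $1-\varepsilon/2$ whose cells have $\overline{H^\alpha}_{F_n}$-diameter less than $\varepsilon/4$ uniformly in $n$, which delivers \eqref{07192}. The reverse implication is immediate, since \eqref{07192} in particular entails bounded complexity w.r.t. $\{\overline{H^\alpha}_{F_n}\}$, whence Theorem \ref{20190428} returns $\eqref{07191}$.

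For $\eqref{07191}\Rightarrow\eqref{07193}$ I would adopt the compatible metric $d'(x,y)=\sum_{k\geq 1}2^{-k}|f_k(x)-f_k(y)|$ built from a dense sequence $\{f_k\}\subset C(X)$ with $\|f_k\|_\infty\leq 1$, exactly as in the proof of Theorem \ref{20190427}. Discrete spectrum renders each $f_k$ almost periodic, so Theorem \ref{20190320} applies. Given $\eta>0$, pick $N$ with $2^{1-N}<\eta/2$ and, for each $k\leq N$, obtain $\delta_k>0$ and compact $B_k\subset X$ with $\mu(B_k)>1-\eta/(2N)$ such that $\overline{H^k}_{F_n}(x,y)<\eta/2$ whenever $x,y\in B_k$ satisfy $d(x,y)<\delta_k$ and $n\in\N$, where $H^k(x,y)=|f_k(x)-f_k(y)|$. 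Setting $B:=\bigcap_{k\leq N}B_k$ and $\delta:=\min_{k\leq N}\delta_k$, we get $\mu(B)>1-\eta$ and $\overline{d'}_{F_n}(x,y)\leq\sum_{k=1}^{N}2^{-k}(\eta/2)+\sum_{k>N}2^{-k}\cdot 2<\eta$ for all $n\in\N$ whenever $x,y\in B$ with $d(x,y)<\delta$. To pass to $d$, fix the ultimate target $\varepsilon>0$; by uniform continuity of $\mathrm{Id}:(X,d')\to(X,d)$, choose $\zeta>0$ with $d'(u,v)<\zeta\Rightarrow d(u,v)<\varepsilon/2$, and run the above with $\eta<\min(\varepsilon,\varepsilon\zeta/(2\diam_d(X)))$. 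Markov's inequality then bounds the fraction $|\{g\in F_n:d'(gx,gy)\geq\zeta\}|/|F_n|$ by $\eta/\zeta$, so $\overline{d}_{F_n}(x,y)\leq\varepsilon/2+(\eta/\zeta)\diam_d(X)<\varepsilon$, showing that $B$ is equicontinuous in the mean w.r.t. $d$.

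The implication $\eqref{07193}\Rightarrow\eqref{07194}$ is immediate, since $\overline{d}_{F_n}(x,y)<\varepsilon/2$ for every $n$ forces $\limsup_n\overline{d}_{F_n}(x,y)<\varepsilon$. For $\eqref{07194}\Rightarrow\eqref{07191}$ I will show mean equicontinuity yields bounded complexity w.r.t. $\{\overline{d}_{F_n}\}$, from which Theorem \ref{20190427} returns discrete spectrum. Fix $\varepsilon>0$, take compact mean equicontinuous $K$ with $\mu(K)>1-\varepsilon/4$ and $\delta>0$ such that $x,y\in K,\,d(x,y)<\delta$ implies $\limsup_n\overline{d}_{F_n}(x,y)<\varepsilon/8$; cover $K$ by $d$-balls $B(x_i,\delta/2)$, $i=1,\dots,k$, and set $K_i=K\cap B(x_i,\delta/2)$. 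By temperedness, Lindenstrauss' pointwise ergodic theorem applied to the continuous function $d$ on $(X\times X,\mu\times\mu)$ under the diagonal $G$-action produces $d^*\in L^1$ with $\overline{d}_{F_n}(x,y)\to d^*(x,y)$ almost everywhere; choosing $y_i\in K_i$ generically (via Fubini), mean equicontinuity upgrades the pointwise bound to $d^*(x,y_i)<\varepsilon/8$ for $\mu$-a.e. $x\in K_i$. Egorov combined with Fubini then yields a compact $K_i'\subset K_i$ with $\mu(K_i\setminus K_i')<\varepsilon/(4k)$ and a single integer $N$ such that $\overline{d}_{F_n}(x,y_i)<\varepsilon/2$ for all $i$, $x\in K_i'$, and $n\geq N$; hence $C(\overline{d}_{F_n},\varepsilon)\leq k$ for such $n$. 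For each of the finitely many $n<N$, continuity of $\overline{d}_{F_n}$ on the compact space $X$ gives $C(\overline{d}_{F_n},\varepsilon)<\infty$ directly, and taking the maximum supplies the required uniform bound. I expect the main obstacle to sit in $\eqref{07194}\Rightarrow\eqref{07191}$: the passage from the pointwise $\limsup$ estimate afforded by mean equicontinuity to a uniform-in-$n$ complexity bound requires coupling Lindenstrauss' ergodic theorem on $X\times X$ with Egorov carefully, so as to extract a single threshold $N$ valid across all $k$ cells simultaneously.
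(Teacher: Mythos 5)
Your proposal is correct in its main lines but follows a genuinely different route from the paper in two of the four implications. For $(1)\Leftrightarrow(2)$ both arguments rest on Lemma \ref{20190324} and Theorem \ref{20190428}; your common-refinement device, assembling the cells for $H^\alpha$ from cells for the individual $H^i(x,y)=|1_{A_i}(x)-1_{A_i}(y)|$, is a clean way around the fact that $H^\alpha$ is not literally of the form $|h(x)-h(y)|$ demanded by Lemma \ref{20190324}, and makes explicit what the paper leaves implicit. Where you diverge: first, you prove $(1)\Rightarrow(3)$ directly through the auxiliary metric $d'(x,y)=\sum_k 2^{-k}|f_k(x)-f_k(y)|$, Theorem \ref{20190320} applied to each $f_k$, and a Markov-inequality transfer from $\overline{d'}_{F_n}$ to $\overline{d}_{F_n}$; the paper instead proves $(2)\Rightarrow(3)$ by taking partitions $\alpha^m$ of mesh $\tau/2^m$, intersecting the resulting compact cells into one set $K$, and running a compactness/contradiction argument that compares $\overline{d}_{F_{s_k}}$ with $\overline{H^{\alpha^{M}}}_{F_{s_k}}$. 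Second, for $(4)\Rightarrow(1)$ you invoke Lindenstrauss' pointwise ergodic theorem on $X\times X$ together with Egorov to extract a single threshold $N$, whereas the paper avoids the ergodic theorem entirely: it uses only the exhaustion $B(x_j,\delta)\cap K=\bigcup_N A_N(x_j)$ and inner regularity to pick pairwise disjoint compact $K_j\subset A_{N_j}(x_j)$, then handles the finitely many $n<N$ by uniform continuity. Both of your alternatives are sound (temperedness, which your ergodic-theorem step needs, is part of the hypothesis), though the paper's $(4)\Rightarrow(1)$ is the more elementary of the two.

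One point you should repair in $(1)\Rightarrow(3)$: the set $B$ you construct depends on the target $\eps$, but an equicontinuous-in-the-mean set must satisfy the $\eps$--$\delta$ condition for \emph{every} $\eps>0$ simultaneously. Given $\tau>0$ you need to run your construction along a sequence $\eps_m\downarrow 0$ with measure losses summing to less than $\tau$ and take $K=\bigcap_m B^{(m)}$; this is routine and is precisely the countable intersection the paper performs with its sets $K_m$. A similarly harmless remark for $(4)\Rightarrow(1)$: discard the cells $K_i$ of measure zero before selecting the generic base points $y_i$ via Fubini.
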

\begin{proof}
The equivalence of $\eqref{07191}\Leftrightarrow \eqref{07192}$ follows directly from
Lemma \ref{20190324} and Theorem \ref{20190428}.
The direction $\eqref{07193}\Rightarrow \eqref{07194}$ is trivial, and so it remains to prove $\eqref{07192}\Rightarrow \eqref{07193}$ and $\eqref{07194}\Rightarrow \eqref{07191}$.

$\eqref{07192}\Rightarrow \eqref{07193}$
Fix $\tau> 0$, and assume without loss of generality $\text{diam} (X)\leq 1$, where $\text{diam} (\bullet)$ denotes the diameter of a subset $\bullet$.
For each $m\in \N$ we may take $\alpha^m=\{A_m^1, \cdots,A_m^{t_m}\}\in \mathcal{P}_X$ such that $\text{diam} (A_m^i)<\frac{\tau}{2^m}$ for all $i= 1, \cdots, t_m$. By the assumption,
there exist finitely many Borel subsets $B_{m}^1, \cdots, B_{m}^{n_{m}}$
such that $\mu(\bigcup_{i=1}^{n_m} B_m^i)>1-\frac{\tau}{2^m}$ and $\overline{H^{\alpha^m}}_{F_n}(x,y)<\frac{\tau}{2^m}$ for all $x,y\in B_m^i$ (for some $i= 1, \cdots, n_m$) and any $n\in \N$. Furthermore, by the regularity of $\mu$, we may assume that each $B_{m}^i, i= 1, \cdots, n_m$ is a compact set. Then
 $\mu(K_{m})>1-\frac{\tau}{2^m}$ for each $m\in \N$ and hence $\mu(K)>1-\tau$, where $K=\overset{\infty}{\underset{ m=1}{\bigcap}} K_m$ and
 $K_{m}=  \overset{n_{m}}{\underset{ i=1}{\bigcup}} B_{m}^i$ for each $m\in \N$.

Now we show that $K$ is equicontinuous in the mean w.r.t. $\{F_n: n\in \N\}$. Clearly, $K\subset X$ is compact.
 Assume the contrary that $K$ is not equicontinuous in the mean w.r.t. $\{F_n: n\in \N\}$, and so there exists $\eta>0$ such that for any $k\in \N$ there are $x_k,y_k\in K$
 and $s_k\in \N$ such that $d(x_k,y_k)<\frac{1}{k}$ and $\overline{d}_{F_{s_k}}(x_k,y_k)\geq \eta$.
 As $K$ is compact, we may assume (by taking a subsequence if necessary) $\lim_{k\rightarrow \infty} x_k= x$ for some $x\in K$,
and hence $\lim_{k\rightarrow \infty} y_k= x$.

 Fix each $k\in\N$. By the triangle inequality, either
 $\overline{d}_{F_{s_k}}(x_k,x)\geq \frac{\eta}{2}$ or $\overline{d}_{F_{s_k}}(y_k,x)\geq \frac{\eta}{2}$.
 Again
 we may assume $\overline{d}_{F_{s_k}}(x_k,x)\geq \frac{\eta}{2}$ (by choosing a subsequence if necessary).
 Set $E_m^k=\{g\in F_{s_k}\colon \alpha_g^m(x)\neq \alpha_g^m(x_k)\}$ for each $m\in \N$, then $\overline{H^{\alpha^m}}_{F_{s_k}}(x_k,x)=\frac{1}{|F_{s_k}|}\cdot |E_m^k|$.
Recall $\text{diam} (X)\leq 1$ and $\text{diam} (A_m^i)<\frac{\tau}{2^m}$ for each $i= 1, \cdots, t_m$. Thus if $g\in E_m^k$ then $d(gx,gx_k)\leq 1$ and if $g \in F_{s_k}\setminus E_m^k$ then $d(gx,gx_k)\leq \frac{\tau}{2^m}$. So, for any $m\in \N$,
 $$\frac{1}{|F_{s_k}|}\cdot \left(|E_m^k|\cdot 1+ (|F_{s_k}|-|E_m^k|)\cdot \frac{\tau}{2^m}\right)\geq
 \overline{d}_{F_{s_k}}(x_k,x)\geq \frac{\eta}{2}.$$
 Take $M\in \N$ with $\frac{\tau}{2^{M}}< \frac{\eta}{4}$. Then $|E_{M}^k|> \frac{\eta}{4}\cdot |F_{s_k}|$, and hence $\overline{H^{\alpha^{M}}}_{F_{s_k}}(x_k,x)> \frac{\eta}{4}$.
 Since $\{x_k: k\in \N\}\subset K$ and $K\subset K_{M}$, we may assume $\{x_k: k\in \N\}\subset B_{M}^i$ for some $i= 1, \cdots, n_{M}$ (choosing subsequence if necessary).
 Since $B_{M}^i$ is compact, one has $x\in B_{M}^i$,
and then $ \overline{H^{\alpha^{M}}}_{F_{s_k}}(x_k,x)< \frac{\tau}{2^{M}}<\frac{\eta}{4} $, a contradiction.

$\eqref{07194}\Rightarrow \eqref{07191}$ By Theorem \ref{20190427} it suffices to prove that once $\mu$ is mean equicontinuous w.r.t. $\{F_n: n\in \N\}$ then $\mu$ has bounded complexity w.r.t. $\{\overline{d}_{F_n}: n\in \N\}$.

% @@@@@@@@@@@@@@@@

Fix any $\ep>0$. Then there is compact $K\subset X$
such that $\mu(K)>1-\frac{\ep}{2}$ and $K$ is mean equicontinuous w.r.t. $\{F_n: n\in \N\}$, and so there exists $\delta>0$ such that if $x,y\in K$ satisfy $d(x,y)<\delta$ then $\limsup_{n\ra \infty} \overline{d}_{F_n} (x, y)<\ep/4$.
By the compactness of $K$,
there exists $\{x_1, \cdots, x_m\}\subset K$ such that
$K\subset \bigcup_{i= 1}^m B (x_i,\delta)$.
For each $j=1,\cdots,m$ and any $N\in\N$, let
\[A_N(x_j)=\left\{y\in B(x_j,\delta)\cap K\colon
\overline{d}_{F_n} (x_j, y)<\ep/2\
\text{for all}\ n\ge N\right\}.
\]
It is easy to see that, for each $j=1,\cdots,m$,
$\{A_N(x_j): N\in \N\}$ is an increasing sequence of Borel subsets and $B(x_j,\delta)\cap K=\bigcup_{N=1}^\infty A_N(x_j)$.
Thus we may choose $N_1\in\N$ and compact $K_1\subset A_{N_1}(x_1)$
such that
\[
\mu(K_1)>\mu(B(x_1,\delta)\cap K)-\frac{\ep}{2m}.
\]
And then choose $N_2\in\N$ and compact $K_2\subset A_{N_2}(x_2)$
such that
\[
K_1\cap K_2=\emptyset
\text{ \ and \ }
\mu(K_1\cup K_2)>\mu((B(x_1,\delta)\cup B(x_2,\delta))\cap K)-\frac{2\ep}{2m}.
\]
By induction,
we can choose $N_j\in \N$ and compact $K_j\subset A_{N_j}(x_j)$ for all $j=1,\cdots,m$
such that $K_1, \cdots, K_m$ are pairwise disjoint and $\mu (K_0)> \mu(K)-\frac{\ep}{2}>1-\ep$, where $K_0=\bigcup_{j=1}^m K_j$.

Now set $N=\max\{N_1, \cdots, N_m\}$.
Clearly there exists $\delta_2>0$ such that if $x,y\in X$ satisfy $d(x,y)<\delta_2$ then $\max_{g\in F_{n}}d(gx,gy)<\ep$ for all $n= 1, \cdots, N$.
Let
\[\delta_1=\min \left\{\underset{ 1\leq i\not=j \leq m}{\min}d(K_i,K_j),\ \ \delta_2\right\}> 0.\]
By the compactness of $K_0$, there exists finite $H\subset K_0$ such that $K_0\subset \bigcup_{x\in H}B(x,\delta_1)$.
Fix $n\geq 1$ and $y\in K_0$, and we choose $x\in H$ with $d(x,y)<\delta_1$.
If $n<N$, then $\overline{d}_{F_n}(x,y)\leq \max_{g\in F_{n}}d(gx,gy)<\ep$.
If $n\geq N$, by the construction of $\delta_1$ there exists $j\in\{1,\cdots,m\}$
such that both $x$ and $y$ belong to $K_j\subset A_{N_j}(x_j)$, and hence (observing $n\geq N\ge N_j$)
\[
\overline{d}_{F_n} (x, y)\le
\overline{d}_{F_n} (x, x_j)+ \overline{d}_{F_n} (x_j, y)
<\ep/2+\ep/2=\ep.
\]
Summing up, for each $n\in \N$, we have $\overline{d}_{F_n}(x,y)<\ep$ and then
\[K_0\subset K_\ep\ \text{where}\ K_\ep= \bigcup_{x\in H} B_{\overline{d}_{F_n}}(x,\ep),\]
in particular, $\mu (K_\ep)
\geq \mu(K_0)>1-\ep$. Thus $C (\overline{d}_{F_n}, 2 \ep)\le |H|$ for all $n\in \N$. By arbitrariness of $\ep> 0$ we have that $\mu$ has bounded complexity w.r.t. $\{\overline{d}_{F_n}: n\in \N\}$.
\end{proof}

\bibliographystyle{amsplain}

%\bibliography{YuZhZh}

\begin{thebibliography}{10}

\bibitem{Fer}
S\'{e}bastien Ferenczi, \emph{Measure-theoretic complexity of ergodic systems},
  Israel J. Math. \textbf{100} (1997), 189--207. \MR{1469110}

\bibitem{EF}
Erling F{\o}lner, \emph{On groups with full {B}anach mean value}, Math. Scand.
  \textbf{3} (1955), 243--254. \MR{0079220}

\bibitem{FGL}
Gabriel Fuhrmann, Maik Gr\"{o}ger, and Daniel Lenz, \emph{The structure of mean
  equicontinuous group actions}, arXiv:1812.10219v1 (2018).

\bibitem{Felipe}
Felipe Garc\'{\i}a-Ramos, \emph{Weak forms of topological and
  measure-theoretical equicontinuity: relationships with discrete spectrum and
  sequence entropy}, Ergodic Theory Dynam. Systems \textbf{37} (2017), no.~4,
  1211--1237. \MR{3645516}

\bibitem{GM15}
Felipe Garc\'{\i}a-Ramos and Brian Marcus, \emph{Mean sensitive, mean
  equicontinuous and almost periodic functions for dynamical systems}, Discrete
  Contin. Dyn. Syst. \textbf{39} (2019), no.~2, 729--746. \MR{3918192}

\bibitem{HVN}
Paul~R. Halmos and John von Neumann, \emph{Operator methods in classical
  mechanics. {II}}, Ann. of Math. (2) \textbf{43} (1942), 332--350.
  \MR{0006617}

\bibitem{HLTXY}
Wen Huang, Jian Li, Jean-Paul Thouvenot, Leiye Xu, and Xiangdong Ye,
  \emph{Bounded complexity, mean equicontinuity and discrete spectrum}, Ergodic
  Theory Dynam. Systems, arXiv:1806.02980 (to appear).

\bibitem{HWY}
Wen Huang, Zhiren Wang, and Xiangdong Ye, \emph{Measure complexity and
  {M}\"{o}bius disjointness}, Adv. Math. \textbf{347} (2019), 827--858.
  \MR{3920840}

\bibitem{Katok}
Anatole~B. Katok, \emph{Lyapunov exponents, entropy and periodic orbits for
  diffeomorphisms}, Inst. Hautes \'{E}tudes Sci. Publ. Math. (1980), no.~51,
  137--173. \MR{573822}

\bibitem{Lenz}
Daniel Lenz, \emph{An autocorrelation and discrete spectrum for dynamical
  systems on metric spaces}, Ergodic Theory Dynam. Systems, arXiv:1608.05636
  (to appear).

\bibitem{LS09}
Daniel Lenz and Nicolae Strungaru, \emph{Pure point spectrum for measure
  dynamical systems on locally compact abelian groups}, J. Math. Pures Appl.
  (9) \textbf{92} (2009), no.~4, 323--341. \MR{2569181}

\bibitem{Lindenstrauss}
Elon Lindenstrauss, \emph{Pointwise theorems for amenable groups}, Invent.
  Math. \textbf{146} (2001), no.~2, 259--295. \MR{1865397}

\bibitem{MoulinOllagnier85book}
Jean Moulin~Ollagnier, \emph{Ergodic theory and statistical mechanics}, Lecture
  Notes in Mathematics, vol. 1115, Springer-Verlag, Berlin, 1985. \MR{781932
  (86h:28013)}

\bibitem{OW}
Donald~S. Ornstein and Benjamin Weiss, \emph{Entropy and isomorphism theorems
  for actions of amenable groups}, J. Analyse Math. \textbf{48} (1987), 1--141.
  \MR{910005}

\bibitem{Sakai}
Koukichi Sakai, \emph{On compact transformation semigroups with discrete
  spectrum}, Sci. Rep. Kagoshima Univ. (1985), no.~34, 11--17. \MR{824951}

\bibitem{S82}
Bruno Scarpellini, \emph{Stability properties of flows with pure point
  spectrum}, J. London Math. Soc. (2) \textbf{26} (1982), no.~3, 451--464.
  \MR{684559}

\bibitem{V10}
Anatoly~M. Vershik, \emph{Dynamics of metrics in measure spaces and their
  asymptotic invariants}, Markov Process. Related Fields \textbf{16} (2010),
  no.~1, 169--184. \MR{2664340}

\bibitem{V11}
\bysame, \emph{Scaled entropy and automorphisms with a pure point spectrum},
  Algebra i Analiz \textbf{23} (2011), no.~1, 111--135. \MR{2760149}

\bibitem{Vershik}
Anatoly~M. Vershik, Pavel~B. Zatitskiy, and Fedor~V. Petrov, \emph{Geometry and
  dynamics of admissible metrics in measure spaces}, Cent. Eur. J. Math.
  \textbf{11} (2013), no.~3, 379--400. \MR{3016311}

\bibitem{Walters}
Peter Walters, \emph{An introduction to ergodic theory}, Graduate Texts in
  Mathematics, vol.~79, Springer-Verlag, New York-Berlin, 1982. \MR{648108}

\bibitem{WZ}
Thomas~B. Ward and Qing Zhang, \emph{The {A}bramov-{R}okhlin entropy addition
  formula for amenable group actions}, Monatsh. Math. \textbf{114} (1992),
  no.~3-4, 317--329. \MR{1203977 (93m:28023)}

\bibitem{Yu2018}
Tao Yu, \emph{Measure-theoretic mean equicontinuity and bounded complexity}, J.
  Differential Equations, arXiv:1807.05868 (to appear).

\bibitem{Zimmer}
Robert~J. Zimmer, \emph{Ergodic actions with generalized discrete spectrum},
  Illinois J. Math. \textbf{20} (1976), no.~4, 555--588. \MR{0414832}

\end{thebibliography}

%\iffalse

%\fi

\end{document}